\numberwithin{equation}{section}
\newtheorem{teo}{Theorem}[section] 
\newtheorem{lem}[teo]{Lemma}
\newtheorem{defi}[teo]{Definition}
\newtheorem{oss}[teo]{Remark}
\newtheorem{ex}[teo]{Example}
\newtheorem{cex}[teo]{Counterexample}
\newtheorem{conjecture}[teo]{Conjecture}
\title[Bounds on the denominators in the canonical bundle formula]{Bounds on the denominators in the canonical bundle formula}
\date{\today}
\author{Enrica Floris}
\address{Enrica Floris\\
IRMA, Universit\'e de Strasbourg et CNRS\\
7 rue Ren\'e-Descartes\\
67084 Strasbourg Cedex\\
France}
\email{floris@math.unistra.fr}
\begin{document}

\begin{abstract}
In this work we study the moduli part $M_Z$ in the canonical bundle formula
of an lc-trivial fibration $f\colon(X,B)\rightarrow Z$ whose generic fibre $F$ is a rational curve.
If $r$ is the Cartier index of $(F,B_F)$
it was expected that $12r$ would provide a bound on the denominators of $M_Z$.
Here we prove that such a bound cannot even be polynomial in $r$,
we provide a bound $N(r)$ and an example where the minimum integer
$V$ such that $VM_Z$ has integer coefficients is at least $N(r)/r$.
Moreover we prove that even locally the denominators of $M_Z$
depend quadratically on $r$.
\end{abstract}

\maketitle

\section{Introduction}
The canonical bundle formula is an important tool in classification theory
to reduce the study of varieties of intermediate Kodaira dimension,
that is $0<{\rm kod}(X)<\dim X$,
to the study of varieties, more precisely pairs,
having Kodaira dimension 0 or equal to their dimension.

To be precise, let $(X,B)$ be a log canonical pair,
where $X$ is a normal variety of dimension $n$ over the field $\mathbb{C}$
and $B$ a $\mathbb{Q}$-divisor.
We consider the canonical ring of $(X,B)$
$$R(X,B)=\oplus \Gamma(X,m(K_X+B))$$
where the sum runs over the $m$ sufficiently divisible.
If $R(X,B)$ is not the ring $0$, then for $m$ sufficiently large and divisible
$|m(K_X+B)|$ defines a morphism
$$\phi\colon X'\rightarrow Z$$
where $X'$ is some birational model of $X$.
There are three cases.
\begin{enumerate}
\item If $\dim Z=0$ then $K_{X'}+B'$ is torsion.
\item If $0<\dim Z<n$ then $\phi$ is a fibration with general fibre $F$
such that $K_F+B'|_F$ is torsion.
\item If $\dim Z=n$ then $(X,B)$ is \textit{of log general type}.
\end{enumerate}
If $X$ is a smooth surface and $B=0$ the three cases become the following.
\begin{enumerate}
\item The canonical divisor $K_X$ is torsion and more precisely
$mK_X\cong\mathcal{O}_X$ for some $m\in\{1,2,3,4,6\}$.
Smooth surfaces of this type are classified up to isomorphism.
\item The morphism $\phi$ is a fibration with generic fibre an elliptic curve.
\item If $\dim Z=2$ then $X$ is of general type.
\end{enumerate}
In the second case we have Kodaira's canonical bundle formula for a minimal elliptic surface
(see for instance ~\cite[Chapter V, Theorem 12.1]{BPV})
\begin{eqnarray}\label{kodsup}
K_X&=&\phi^{\ast}(K_Z+\sum_{p\in Z}(1-\frac{1}{m_p})p+L)
\end{eqnarray}
where $L$ is of the form $R+j^{\ast}\mathcal{O}_{\mathbb{P}^1}(1)$,
with $R$ is supported on the singular locus of $\phi$
and $j\colon Z\rightarrow\mathbb{P}^1$ is the $j$-function.
The sum in the formula is over the $p\in Z$ such that $\phi^{\ast}p$
is a multiple fibre and $m_p$ is such that
$\phi^{\ast}p=m_p S_p$ where $S_p$ is the support of the fibre.
Kawamata in ~\cite{Kaw1,Kaw2} pointed out that the divisor $R+\sum(1-1/m_p)p$
can be computed in terms of the pair $(X,B)$.
More precisely, if $R+\sum(1-1/m_p)p=\sum b_p p$
then $1-b_p$ is the largest real number $t$
such that the pair $(X,B+t f^{\ast}p)$ is log canonical.
In the case where $X$ has dimension $n$, the current generalization of the formula is
due to Ambro ~\cite{Amb04} and reads as follows:
\begin{eqnarray}\label{CBFamb}
K_X + B +\frac{1}{r}(\varphi) &= &\phi^{\ast}(K_Z+B_Z+M_Z)
\end{eqnarray}
where $r\in\mathbb{N}$ is the Cartier index of the fibre, $\varphi$ is a rational function,
the divisor $B_Z$ is called the \textit{discriminant} and corresponds to $\sum(1-\frac{1}{m_p})p+R$
in Kodaira's formula,
while $M_Z$, called the \textit{moduli part}, corresponds to $j^{\ast}\mathcal{O}_{\mathbb{P}^1}(1)$
and measures the (birational) variation of the fibres.
All the theory about the canonical bundle formula is developed for lc-trivial fibrations.
The definition of this class of fibrations is quite technical and for it we refer to the second section.
It is shown in ~\cite{Amb04} by Ambro, for $(X,B)$ generically klt on the base,
and in ~\cite{Corti} by Koll\'ar in the lc case
the following result
\begin{teo}[Ambro, ~\cite{Amb04} Theorem 0.2,  Koll\'ar, ~\cite{Corti}]
Let $f \colon (X,B) \rightarrow Z$ be an lc-trivial fibration.
Then there exists a proper birational morphism
$Z' \rightarrow Z$ with the following properties:
\begin{enumerate}
\item $K_{Z'}+B_{Z'}$ is a $\mathbb{Q}$-Cartier divisor, 
and $\nu^{\ast}(K_{Z'}+B_{Z'}) = K_{Z''}+B_{Z''}$ 
for every proper birational morphism $\nu\colon Z' \rightarrow Z''$.
\item the divisor $M_{Z'}$ is $\mathbb{Q}$-Cartier and nef
and $\nu^{\ast}(M_{Z'}) = M_{Z''}$ for every
proper birational morphism $\nu\colon Z' \rightarrow Z''$.
\end{enumerate}
\end{teo}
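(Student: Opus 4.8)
The plan is to phrase both assertions in the language of b-divisors on $Z$ and to prove that the discriminant and the moduli part stabilise after a single birational modification, the latter with the extra property of nefness. For every proper birational model $\sigma\colon Z'\to Z$ one produces, by passing to a suitable birational model of $X$ over $Z'$, an induced lc-trivial fibration $f'\colon(X',B')\to Z'$ with its own discriminant $B_{Z'}$ and moduli part $M_{Z'}$ determined by descending $K_{X'}+B'+\tfrac1r(\varphi)$ and setting $M_{Z'}=(K_{Z'}+B_{Z'}+M_{Z'})-(K_{Z'}+B_{Z'})$. The two statements then read: the collection $\{K_{Z'}+B_{Z'}\}$ forms a b-Cartier b-divisor, and $\{M_{Z'}\}$ forms a b-Cartier b-divisor whose trace on a high enough model is nef. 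I would treat the two parts separately, since the first is governed by log canonical thresholds and the second by Hodge theory.

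For the discriminant, statement (1), I would argue at codimension one. For a prime divisor $P$ on a model the coefficient of $P$ in the discriminant is $1-t_P$, where $t_P$ is the log canonical threshold of $f'^{\ast}P$ over the generic point of $P$ with respect to the induced pair. Given $\nu\colon Z'\to Z''$ and a $\nu$-exceptional prime divisor $E$, the desired equality $K_{Z'}+B_{Z'}=\nu^{\ast}(K_{Z''}+B_{Z''})$ amounts to checking that $1-t_E$ equals the coefficient of $E$ in $\nu^{\ast}(K_{Z''}+B_{Z''})-K_{Z'}$, that is, that $t_E$ equals the log discrepancy of $E$ with respect to $(Z'',B_{Z''})$. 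This identity holds once $Z'$ dominates a log resolution of $(X,B)$ together with the branch and degeneration loci of $f$, because there the thresholds are computed by a simple normal crossing model and cease to vary under further blow-ups; this stabilisation furnishes the required model $Z'$.

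For the moduli part, statement (2), I would invoke the theory of variations of Hodge structure, since by construction $M_{Z'}$ measures the birational variation of the fibres. The plan is first to reduce, by base change along a Koll\'ar--Kawamata covering followed by semistable reduction, to the situation where the relevant local system $R^{d}f'_{\ast}\mathbb{C}$ (with $d=\dim F$) underlies a polarised variation of Hodge structure with unipotent monodromy along a simple normal crossing divisor. In this normalised situation one identifies $M_{Z'}$, up to a positive rational multiple, with the first Chern class of the Deligne canonical extension of the bottom piece of the Hodge filtration; its nefness is then exactly the Fujita--Kawamata semipositivity theorem for such extensions. The descent $M_{Z'}=\nu^{\ast}M_{Z''}$ follows from the functoriality of the canonical extension under blow-ups of the base, and one transports the conclusion back to the original $Z$ by pushing forward along the covering and dividing by its degree.

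The hard part is statement (2): both the passage from the abstract moduli part to a genuine Hodge-theoretic object and the semipositivity input are substantial, and the reduction to unipotent monodromy must be carried out without losing control of how the model and the index $r$ change under the covering. A second, more technical obstacle is that we work in the log canonical rather than the klt setting, so along the non-klt locus the fibres need not be klt; here I would follow Koll\'ar's reduction of the log canonical case to the klt one, using adjunction on the log canonical strata together with a further covering trick, and then glue the resulting b-divisors to recover the statement in full generality.
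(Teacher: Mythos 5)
The paper offers no proof of this statement: it is quoted verbatim from Ambro \cite{Amb04} and Koll\'ar \cite{Corti}, so your proposal has to be measured against those cited proofs, whose overall architecture it does reproduce (b-divisor formulation; stabilisation of log canonical thresholds on a prepared model for part (1); covering tricks, canonical extensions of Hodge bundles and Fujita--Kawamata semipositivity for part (2); Koll\'ar's reduction of the lc case to the klt case).

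There is, however, one concrete error in your Hodge-theoretic input. The local system you propose, $R^{d}f'_{\ast}\mathbb{C}$ with $d=\dim F$, is the correct object only when $B=0$ (the Fujino--Mori setting). When $B\neq 0$ --- which is exactly the situation of this paper, where the generic fibre is $\mathbb{P}^1$, so that $R^1 f_{\ast}\mathbb{C}$ vanishes over the open locus and carries no moduli information whatsoever --- the moduli part is simply not visible in the cohomology of the fibres of $f$. One must first form the degree-$r$ cyclic covering $\pi\colon \bar{X}\rightarrow X$ obtained by adjoining $\varphi^{1/r}$ (normalised, branched along a divisor prescribed by $B$ and the defining relation $r(K_X+B)+(\varphi)=f^{\ast}(rD)$), and take the appropriate eigenspace sub-variation of Hodge structure of $R^{d}(f\circ\pi)_{\ast}\mathbb{C}$; it is the canonical extension of the bottom Hodge piece of \emph{this} eigenspace that computes $M_Z$, and Ambro's base-change and descent statements are proved for that object. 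In your sketch the ``Koll\'ar--Kawamata covering'' serves only to render the monodromy unipotent, so as written the boundary $B$ and the function $\varphi$ never enter your variation of Hodge structure, and the identification of $M_{Z'}$ with a first Chern class fails in every case this paper actually studies. A secondary, lesser point: in part (1) the assertion that the thresholds ``cease to vary under further blow-ups'' once $Z'$ dominates a suitable log resolution is precisely the nontrivial content (\cite{Amb99}, see also \cite{Corti}); it requires showing, on a model where $f$ is prepared over a simple normal crossing divisor, that the log canonical threshold of $f'^{\ast}E$ over the generic point of an exceptional divisor $E$ equals the log discrepancy of $E$ with respect to $(Z'',B_{Z''})$, and this computation should be carried out rather than asserted.
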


The regularity of the pair $(Z,B_Z)$ depends on the
regularity of $(X,B)$, more precisely $(Z,B_Z)$ is klt (resp. lc)
if and only if $(X,B)$ is (see ~\cite[Proposition 3.4]{Amb99}).

Furthermore the following properties are conjectured for $M_Z$.
\begin{conjecture}[Prokhorov-Shokurov, ~\cite{PS} Conjecture 7.13]\label{bigconj}
Let $f\colon(X,B)\rightarrow Z$ be an lc-trivial fibration.
\begin{enumerate}
\item (Log Canonical Adjunction) There exists a proper birational morphism $Z'\rightarrow Z$
such that $M_{Z'}$ is semiample.
\item (Particular Case of Effective Log Abundance Conjecture) Let
$X_{\eta}$ be the generic fibre of $f$. Then $I_0 (K_{X_{\eta}} +B_{\eta})\sim 0$, where $I_0$ depends
only on $\dim X_{\eta}$ and the multiplicities of the horizontal part of $B$.
\item (Effective Adjunction) The divisor $M_Z$ is effectively semiample, that
is, there exists a positive integer $I$ depending only on the dimension
of $X$ and the horizontal multiplicities of $B$ (a finite set of rational
numbers) such that $IM_Z$ is the pullback of $M$,
where $M$ is a base point free divisor on some model $Z'/Z$.
\end{enumerate}
\end{conjecture}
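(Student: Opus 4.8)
The plan is to attack the three statements through the Hodge-theoretic description of the moduli part, using as input the cited theorem of Ambro and Koll\'ar: after a birational base change we may assume $M_{Z'}$ is $\mathbb{Q}$-Cartier, nef, and birationally stable (compatible with pullback under further blow-ups). The guiding principle is that, over a suitable open locus of $Z'$, the class $M_{Z'}$ is computed by the variation of Hodge structure of the fibres $(F,B_F)$; concretely, one expects $M_{Z'}$ to be the pullback, under a classifying (period/moduli) map, of a natural polarization on a parameter space for the log Calabi--Yau pairs occurring as fibres. Since the theorem already gives nefness, the substance of Conjecture \ref{bigconj}(1) is to upgrade nefness to semiampleness, which is exactly what a pullback-from-ample description would yield.

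The first step is statement (2), which I would treat as the arithmetic engine feeding the other two. The generic fibre $(X_\eta,B_\eta)$ satisfies $K_{X_\eta}+B_\eta\equiv 0$ and is log canonical, hence is a log Calabi--Yau pair of dimension $\dim X-\dim Z$ whose horizontal coefficients lie in a fixed finite set. I would invoke boundedness of such pairs to produce an integer $I_0$, depending only on this dimension and on the coefficient set, with $I_0(K_{X_\eta}+B_\eta)\sim 0$. In the setting of this paper, where $F$ is a rational curve, this is elementary: $(F,B_F)$ is $\mathbb{P}^1$ (or an elliptic curve) with a bounded number of marked points of bounded denominators, so linear, not merely numerical, triviality follows directly. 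It is this passage from $\equiv 0$ to $\sim 0$ with a controlled $I_0$ that makes the denominators of the formula effectively bounded.

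Using $I_0$ I would then establish (1) and (3) simultaneously. Trivializing $I_0(K_{X_\eta}+B_\eta)$ over a finite cover of the base realizes $I_0 M_{Z'}$ as the pullback of a line bundle from a moduli space $\mathcal{M}$ of the fibres; appealing to projectivity and ampleness results for moduli of (stable) log Calabi--Yau pairs, equivalently positivity of the associated Hodge bundle, one concludes that this line bundle is semiample, and hence that $M_{Z'}$ is semiample, giving (1). Tracking the degree of the cover and of the polarization, both controlled by the dimension and the coefficient set, then produces a uniform $I$ with $IM_Z$ the pullback of a base point free divisor, which is Effective Adjunction (3). In the curve case the moduli map is the classical $j$-function, as in Kodaira's formula \eqref{kodsup}, so every step can be made explicit.

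The main obstacle is the middle of the previous paragraph: constructing a quasi-projective \emph{coarse moduli space} of the relevant log Calabi--Yau fibres carrying an ample polarization that pulls back to a multiple of $M_{Z'}$, and doing so with \emph{uniform} control of the index $I$. In arbitrary dimension this rests on boundedness of Calabi--Yau pairs and on the construction of their moduli with a canonical polarization, which is where the genuine difficulty lies and why the conjecture remains open in general. Even in the curve case treated here the \emph{dependence} of $I$ on the Cartier index $r$ is subtle: as this paper shows, the index cannot be bounded polynomially in $r$, so any effective version of (3) must account for denominators growing faster than any polynomial in $r$, which is precisely the phenomenon quantified in the sequel.
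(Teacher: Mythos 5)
The statement you are addressing is not a theorem of this paper at all: it is the Prokhorov--Shokurov conjecture, which the paper records as open in general and which is known only for $\dim X=\dim Z+1$, by a cited result of Prokhorov and Shokurov (Theorem \ref{PSteo}) whose proof, as the paper stresses, relies essentially on the moduli space $\mathcal{M}_{0,n}$. So there is no ``paper's own proof'' to match, and your text cannot be assessed as a correct alternative proof --- and indeed it is not one. You yourself identify the fatal gap: everything in your parts (1) and (3) is funneled through the existence of a quasi-projective coarse moduli space of the log Calabi--Yau fibres carrying an ample (or at least semiample) polarization whose pullback under the classifying map computes $I_0 M_{Z'}$, with the index $I$ controlled uniformly by $\dim X_\eta$ and the coefficient set. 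No such construction is supplied, and in dimension $\geq 2$ it is precisely the open content of the conjecture; asserting ``appealing to projectivity and ampleness results for moduli of (stable) log Calabi--Yau pairs'' is invoking the conclusion, not proving it. Your part (2) is fine in the curve case (on $\mathbb{P}^1$ a degree-zero divisor is linearly trivial, so $I_0=r$ works), but that elementary fact does not bridge the gap in (1) and (3).

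Two further points of friction with the paper are worth noting. First, your closing remark about ``tracking the degree of the cover and of the polarization'' to get a uniform $I$ runs directly into the paper's main results: the Counterexample and Theorem \ref{mainteo}(1) show that in the rational-curve case the smallest $V$ with $VM_Z$ integral is not bounded by any polynomial in $r$, so any uniform $I$ must depend on the full coefficient set (as the conjecture carefully states) and grows like the superfactorial bound $N(r)$; a naive degree count in $r$ alone, as your sketch suggests, would be falsified by Lemma \ref{spiegone}. Second, your strategy is essentially the moduli-theoretic route of Prokhorov--Shokurov in the fibre-dimension-one case, whereas the whole methodological point of this paper is to avoid $\mathcal{M}_{0,n}$ and argue via explicit log resolutions and log canonical thresholds (Lemma \ref{3ind}, Theorem \ref{dim2}); so even as a plan of attack your proposal reproduces the known approach rather than offering one that could extend to higher-dimensional fibres, where the requisite moduli theory is exactly what is missing.
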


The relevance of the above conjecture is well illustrated for instance
by a remark due to X. Jiang, who observed recently ~\cite[Remark 7.3]{J2}
that Conjecture \ref{bigconj}(3) implies a uniformity statement
for the Iitaka fibration of \textit{any} variety of positive Iitaka
dimension under the assumption that the fibres have a good minimal model.

These conjectures are proved in the case where the fibres have dimension one.
\begin{teo}[Prokhorov-Shokurov, ~\cite{PS}]\label{PSteo}
Conjecture \ref{bigconj} holds in the case $\dim X = \dim Z + 1$.
\end{teo}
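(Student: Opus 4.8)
The plan is to establish the three parts of Conjecture \ref{bigconj} by first reducing everything to the geometry of the generic fibre $X_{\eta}$, which is a curve over $K(Z)$ with $\deg(K_{X_{\eta}}+B_{\eta})=0$ and $B_{\eta}\geq 0$. Since the degree of the canonical divisor of a genus $g$ curve is $2g-2$, the effectivity of $B_{\eta}$ forces $g\leq 1$, so there are exactly two cases: either $X_{\eta}$ is an elliptic curve and $B_{\eta}=0$, or $X_{\eta}\cong\mathbb{P}^1$ and $B_{\eta}=\sum_i b_i p_i$ with $\sum_i b_i=2$. In both cases the $b_i$ are the horizontal multiplicities of $B$, so they are the data on which the promised constants are allowed to depend, and their common denominator controls the Cartier index $r$ of the fibre.

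Part (2) I would dispose of directly in the two cases. If $X_{\eta}$ is elliptic then $K_{X_{\eta}}\cong\mathcal{O}_{X_{\eta}}$ already, so $I_0=1$ works. If $X_{\eta}\cong\mathbb{P}^1$ then $r(K_{X_{\eta}}+B_{\eta})$ is a Cartier divisor of degree $0$ on $\mathbb{P}^1_{K(Z)}$, hence linearly trivial because $\mathrm{Pic}$ of the projective line (even over a non-closed field) is detected by degree; thus $I_0$ can be taken to be the common denominator of the $b_i$, which depends only on the horizontal multiplicities. This settles the particular case of effective log abundance.

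The substance of the theorem lies in parts (1) and (3), the (effective) semiampleness of the moduli part $M_Z$, and my strategy is to exhibit $I M_Z$ as the pullback, under a moduli map, of a base point free multiple of a polarisation on a fixed projective moduli space. In the elliptic case this is Kodaira's formula \eqref{kodsup}: after a birational modification making the fibration minimal, $12 M_Z=j^{\ast}\mathcal{O}_{\mathbb{P}^1}(1)$, where $j\colon Z\to\mathbb{P}^1=\overline{\mathcal{M}}_{1,1}$ is the $j$-function, so $I=12$ suffices. In the rational case I would pass to a suitable birational model and, if necessary, a finite base change $Z'\to Z$ so that the marked points $p_i$ become disjoint sections, producing a moduli morphism $m\colon Z'\to\overline{M}$ to the projective moduli space (a weighted GIT quotient $(\mathbb{P}^1)^n/\!/\mathrm{PGL}_2$, or an Alexeev/Hassett weighted moduli space) of the weighted pointed curves $(\mathbb{P}^1;\sum b_i p_i)$. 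The moduli part is then the pullback by $m$ of the natural polarisation, so a bounded multiple $I M_{Z'}$ equals the pullback of a base point free divisor $M$ on $Z'$; this is exactly the shape of the conclusion in Conjecture \ref{bigconj}(3), and semiampleness (part (1)) follows a fortiori, while birational invariance of $M_Z$ from the theorem of Ambro--Koll\'ar transports the statement back to $Z$. Since the weight vector $(b_1,\dots,b_n)$ together with $n$ determines $\overline{M}$ and its polarisation, and only finitely many combinatorial types are compatible with $\sum b_i=2$, the integer $I$ depends only on $\dim X_{\eta}=1$ and the horizontal multiplicities.

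The main obstacle is the rational case: one must (i) construct the moduli map honestly, which requires turning the family of boundary divisors into a family over which the classifying morphism is defined---this is where semistable reduction, a normal-crossing assumption on $B_Z$, and control of the base change $Z'\to Z$ enter---and (ii) identify the pulled-back polarisation with $M_{Z'}$ up to the explicit factor $I$, not merely up to numerical equivalence. A clean route to both is to replace the weighted configuration by a cyclic cover $C\to\mathbb{P}^1$ branched along $\sum p_i$ with local monodromy prescribed by the $b_i$: the variation of Hodge structure of the resulting family of curves computes $M_Z$ as a piece of the determinant of the Hodge bundle, whose nefness and semiampleness are available, and whose denominator is governed by the order of the cover, that is by the common denominator of the $b_i$. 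Matching this Hodge-theoretic moduli divisor with the $M_Z$ of the canonical bundle formula, and doing so with an effective, multiplicity-dependent constant, is the delicate point that Theorem \ref{PSteo} asserts can be carried out.
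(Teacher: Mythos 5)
You should know at the outset that the paper contains no proof of this statement: Theorem \ref{PSteo} is quoted from Prokhorov--Shokurov \cite{PS}, and the only comment the paper makes on its proof is that it ``strongly uses the existence of the moduli space $\mathcal{M}_{0,n}$.'' In that sense your outline is pointed in the same direction as the actual source: the reduction to the two cases (elliptic fibre with $B_{\eta}=0$, or $X_{\eta}\cong\mathbb{P}^1$ with $\deg B_{\eta}=2$) is correct, your treatment of part (2) is sound (degree zero on a genus one curve or on a form of $\mathbb{P}^1$ forces linear triviality, with $I_0$ controlled by the denominators of the $b_i$), and the idea of realising $M_Z$ as the pullback of a polarisation under a moduli map to a compactified space of weighted pointed rational curves is exactly the mechanism behind \cite{PS}.

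The genuine gap is that the central claim of parts (1) and (3) --- that $IM_{Z'}$ \emph{equals} the pullback under the moduli map of a base point free divisor, with $I$ effective in the multiplicities --- is never argued; your last sentence explicitly defers it as ``the delicate point that Theorem \ref{PSteo} asserts can be carried out,'' which reduces the theorem to itself. Concretely, what is missing is (i) an actual comparison of the moduli b-divisor of the canonical bundle formula with the chosen polarisation on the weighted moduli space, which requires computing $M$ for the universal family and then invoking base-change invariance; note also that the conjecture's $Z'/Z$ is a \emph{birational} model, while your construction passes through a \emph{finite} cover to make the marked points sections, so you must descend: semiampleness does descend along finite surjections, but keeping $I$ dependent only on the horizontal multiplicities forces you to bound the degree of that cover in terms of the $b_i$, a point you do not address. (ii) Your proposed ``clean route'' via cyclic covers and the determinant of the Hodge bundle only yields \emph{nefness} of $M_Z$ from the variation of Hodge structure --- this is Ambro's Theorem \ref{nefness} --- whereas semiampleness is precisely what Hodge theory does not give (that failure is why Conjecture \ref{bigconj} remains open for higher-dimensional fibres), so this alternative would not close the gap either. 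Finally, in the elliptic case you invoke Kodaira's surface formula (\ref{kodsup}), but here $\dim Z$ is arbitrary, so you need the Fujino--Mori type statement that $12M_{Z'}=j^{\ast}\mathcal{O}_{\mathbb{P}^1}(1)$ on a suitable birational model of an arbitrary base, not just for elliptic surfaces.
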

It is important to remark that the proof of Theorem \ref{PSteo} strongly uses the existence
of the moduli space $\mathcal{M}_{0,n}$.
Moreover the constant $I$ that appears in  Theorem \ref{PSteo} is not explicitely determined.
In ~\cite[Remark 8.2]{PS} the authors expect that a sharp result might be $I=12r$ where $r$ is as in Formula (\ref{CBFamb}).
In particular this would imply that the denominators of the $\mathbb{Q}$-divisor $M$ are bounded by $r$.
In the case of one-dimensional fibre, if $B=0$
the general fibre is an elliptic curve and the result follows from Kodaira's Formula (\ref{kodsup}).
If $B\neq 0$ then the generic fibre $F$ is a rational curve and $B$
is effective and such that ${\rm deg}B|_F=2$.
In this case the situation is more complicated.

In this work we prove that in the case where the generic fibre is a rational curve
the expectation of Prokhorov and Shokurov cannot be true.
Indeed we can prove that there are examples in which $12rM$
has not even integer coefficients.
\begin{cex}
There exists an lc-trivial fibration $f\colon(X,B)\rightarrow Z$ 
whose generic fibre is a rational curve
such that $12rM_Z$ has not integer coefficients.
More precisely for any positive and odd $r\in \mathbb{N}$
there exists an lc-trivial fibration $f\colon(X,B)\rightarrow Z$
such that (\ref{CBFamb}) holds and
with moduli divisor $M_Z=\sum c_p p$
and there exists a point $o\in Z$ such that
the minimal integer $m$ such that $m c_o\in\mathbb{Z}$
is greater or equal to $2r^2-r$.
\end{cex}
Neverthless we can show the following local result, which is not far from being sharp by the previous example:
\begin{teo}\label{mainprop}
Let $f\colon (X,B)\rightarrow Z$ be an lc-trivial fibration
whose generic fibre is a rational curve.
Let $B_Z=\sum \beta_i p_i$ be the discriminant.
Then for every $i$ there exists $l_i\leq 2r$
such that $rl_i\beta_i\in \mathbb{Z}$.
\end{teo}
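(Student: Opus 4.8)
The plan is to reduce the computation of each discriminant coefficient $\beta_i$ to a one-dimensional base, to reinterpret $\beta_i$ as a log canonical threshold, and then to control its denominator by performing a semistable reduction of the induced family of pointed rational curves, whose degree I will bound by $2r$.

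First I would localise at the generic point of $p_i$ and restrict the fibration to a general curve $C\subset Z$ meeting $p_i$ transversally at a general point $p$. Since $\beta_i$ is computed at the generic point of $p_i$, and the horizontal multiplicities of $B$ (hence the generic fibre $(F,B_F)$ and its Cartier index $r$) are unchanged by this restriction, I may assume that $Z=C$ is a smooth curve and $X=S$ a surface, with $f\colon S\to C$ having generic fibre $(\mathbb{P}^1,B_F)$, $B_F=\sum_j b_j Q_j$, $\deg B_F=2$ and $rb_j\in\mathbb{Z}_{>0}$. By definition $\beta_i=1-\gamma$, where $\gamma=\mathrm{lct}_p(S,B;f^{\ast}p)$ is the largest $t$ such that $(S,B+t f^{\ast}p)$ is log canonical over $p$; this is a birational invariant and may be computed on any model.

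The key step is a base change. Over the punctured disc $C\setminus\{p\}$ the fibres form a family of $n$-pointed rational curves, with $n\le 2r$ because each $b_j\ge 1/r$ and $\sum_j b_j=2$. Let $N$ be the order of the semisimple part of the local monodromy, that is, the degree of the cyclic base change $\pi_C\colon C'\to C$, $t=s^{N}$, after which the normalised pulled-back family $f'\colon S'\to C'$ acquires reduced central fibre and disjoint sections (semistable/stable reduction of the pointed curve). I would first prove $N\le 2r$: a nontrivial finite-order element of $\mathrm{PGL}_2$ has exactly two fixed points and all remaining orbits of size $N$, and it must permute the marked points preserving their weights; hence, if the semisimple monodromy is nontrivial, some orbit has size $N$ and consists of $N$ marked points of equal weight, whence $N\le n\le 2r$. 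Taking the induced finite cover $\pi\colon S'\to S$ to be crepant, $K_{S'}+B'=\pi^{\ast}(K_S+B)$, the invariance of log canonical thresholds under crepant finite covers together with $\pi^{\ast}f^{\ast}p=N f'^{\ast}p'$ gives $\gamma=\gamma'/N$, where $\gamma'=\mathrm{lct}_{p'}(S',B';f'^{\ast}p')$.

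It then remains to show $\gamma'\in\tfrac1r\mathbb{Z}$. On the semistable model $S'$ is smooth, $B'$ is the horizontal divisor with coefficients $b_j\in\tfrac1r\mathbb{Z}$, and $f'^{\ast}p'$ is reduced, so every log discrepancy $A_{(S',B')}(E)$ lies in $\tfrac1r\mathbb{Z}$ and the threshold is attained on a valuation $E$ with $\mathrm{ord}_E(f'^{\ast}p')=1$; hence $\gamma'\in\tfrac1r\mathbb{Z}$. Writing $\gamma'=c/r$ we obtain $\beta_i=1-\dfrac{c}{rN}$, so that $rN\beta_i=rN-c\in\mathbb{Z}$, and the statement holds with $l_i=N\le 2r$. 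I expect the main obstacle to be precisely these two integrality inputs: bounding the base-change degree $N$ by $2r$ (controlling the monodromy of the pointed family, where the weight-preservation of the permutation of the marked points is essential), and verifying that after semistable reduction the threshold $\gamma'$ is computed by a multiplicity-one valuation, so that no denominator beyond $r$ is introduced upstairs.
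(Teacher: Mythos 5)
There is a genuine gap, and it sits exactly where you predicted: the bound $N\le 2r$ on the base-change degree is false, because your monodromy argument conflates two different monodromies. The local monodromy of the family of pointed rational curves at $p$ is \emph{not} in general induced by an automorphism of the fibre: a $\mathbb{P}^1$-bundle is Zariski-locally trivial, and unless the pointed family is isotrivial near $p$ (constant cross-ratios), the monodromy is merely the covering monodromy of the multisection $\mathrm{Supp}\,B^h\to Z$, i.e.\ an arbitrary permutation $\sigma\in S_n$ whose cycle type $(l_1,\ldots,l_k)$ is the list of ramification indices over $p$, constrained only by $\sum_j l_j\le 2r$. Your $\mathrm{PGL}_2$ dichotomy (two fixed points, all other orbits of full size) simply does not apply to such a $\sigma$. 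Moreover, by your own definition $N$ is the order of $\sigma$: the cyclic base change $t=s^N$ splits the multisection into sections iff every $l_j$ divides $N$, so the minimal $N$ is $\mathrm{lcm}(l_1,\ldots,l_k)$, which is superpolynomial in $r$ in the worst case (pairwise coprime tangency orders over a \emph{single} point of $Z$ -- this is the same mechanism the paper exploits globally in Theorem \ref{mainteo}(1)). Concretely: take $D=D_1+D_2$ with $D_1$ locally $\{t=x^7\}$ and $D_2$ locally $\{t=(x-1)^5\}$, so $D\cdot F=12$, $B=\tfrac16 D$, $r=6$. The threshold at $t=0$ is $\gamma=1+\tfrac17-\tfrac16=\tfrac{41}{42}$, so $\beta=\tfrac1{42}$ and the theorem holds with $l=7\le 2r=12$; but your $N=\mathrm{lcm}(7,5)=35>2r$. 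Your conclusion $rN\beta\in\mathbb{Z}$ is still true there, but the statement requires $l_i\le 2r$, and your route cannot deliver better than $r\cdot\mathrm{lcm}(l_j)$, i.e.\ roughly the \emph{global} bound $N(r)$ rather than the local bound $2r^2$. Trying to repair this by base-changing only by the single cycle length that computes the threshold does not work as stated either: the other branches remain ramified multisections, so no semistable model with disjoint sections exists at that degree, and your step establishing $\gamma'\in\tfrac1r\mathbb{Z}$ (itself only asserted, though fixable on a genuine snc semistable model) is unavailable.

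For contrast, the paper avoids base change and moduli-theoretic reduction altogether -- it emphasizes that no use is made of $\mathcal{M}_{0,n}$, whereas stable reduction of pointed curves is precisely the $\mathcal{M}_{0,n}$ mechanism. It works directly on a $\mathbb{P}^1$-bundle model: if the lc centre is the fibre, $r\gamma\in\mathbb{Z}$ immediately; otherwise one extracts a minimal sequence of blow-ups $\delta$ whose last exceptional curve $E_h$ is the unique $(-1)$-curve and is a log canonical place, writes $1=-e_h+\alpha_h+\gamma a_h$ with $e_h\in\mathbb{Z}$ and $\alpha_h\in\tfrac1r\mathbb{Z}$, and bounds the fibre multiplicity by intersection theory: minimality forces some component $B_1$ of $B$ to meet $E_h$, whence $2r\ge B_1\cdot F=\tilde B_1^h\cdot\delta^{\ast}F\ge a_h$. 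This pins the denominator to a \emph{single} branch's contribution $a_h\le 2r$, which is exactly the point your $\mathrm{lcm}$ loses. Your dimension-reduction step and the crepant base-change formula $\gamma=\gamma'/N$ are fine; the missing idea is a reason why only one cycle length matters, and supplying that reason essentially amounts to redoing the paper's blow-up analysis.
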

An important remark is that for an lc-trivial fibration whose general fibre is a rational curve,
for every $I\in\mathbb{Z}$,
$Ir M_Z$ has integer coefficients if and only if $Ir B_Z$ has integer coefficients.
To prove Theorem \ref{mainprop} we give an expression of the log canonical threshold
of a fibre with respect to $(X,B)$ 
in terms of the pull back of the canonical divisor of $X$,
the pull back of the fibre and the pull back of $B$.

An interesting question is to determine the best possible \textit{global}
bound on the denominators of $M_Z$.
Theorem \ref{mainprop} implies that
$(2r)!M_Z$ has integer coefficients, but it is certainly not the best bound.
Using techniques from Theorem \ref{mainprop}
we can prove that a polynomial global bound cannot exist and determine a bound.
\begin{teo}\label{mainteo}
\begin{enumerate}
	\item A polynomial global bound on the denominators of $M_Z$ cannot exist.
	Precisely for all $N$ there exists an lc-trivial fibration
	$$f\colon(X,B)\rightarrow Z$$
	such that if $V$ is the smallest integer such that $VM_Z$ has integer coefficients
	then $$V\geq r^{N+1}.$$
	\item Let $f\colon (X,B)\rightarrow Z$ be an lc-trivial fibration
whose generic fibre is a rational curve.
Then there exists an integer $N(r)$ that depends only on $r$ such that
$N(r)M_Z$ has integer coefficients.
More precisely if we set
$s(q)=\max\{s \mid q^s\leq 2r\}$ then
$$N(r) = r\prod_{\begin{array}{c}
\scriptstyle q \leq 2r \\
\scriptstyle q\; {\rm prime}
\end{array} }
q^{s(q)}.$$
\item For all $r$ odd there exists an lc-trivial fibration
$$f\colon(X,B)\rightarrow Z$$
	such that if $V$ is the smallest integer such that $VM_Z$ has integer coefficients
	then $V=N(r)/r$.
\end{enumerate}
\end{teo}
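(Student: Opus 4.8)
The plan is to prove the three statements in the order (2), (3), (1): the upper bound (2) pins down the exact arithmetic quantity that the examples of (3) must saturate, and (1) will then follow from (3) by a growth estimate. For (2) the key is an elementary identity. Writing $L=N(r)/r=\prod_{q\le 2r}q^{s(q)}$, the exponent $s(q)=\max\{s:q^s\le 2r\}$ equals $\lfloor\log_q(2r)\rfloor$, so $L=\operatorname{lcm}(1,2,\dots,2r)$. By the remark preceding the theorem, applied with $I=L$ (note $r\mid L$ since $r\le 2r$), the divisor $N(r)M_Z=rL\,M_Z$ has integer coefficients as soon as $rL\,B_Z$ does, and this amounts to $L\cdot(r\beta_i)\in\mathbb Z$ for every $i$. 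Theorem \ref{mainprop} provides $l_i\le 2r$ with $r l_i\beta_i\in\mathbb Z$, so the denominator of $r\beta_i$ divides $l_i$, hence divides $\operatorname{lcm}(1,\dots,2r)=L$. Thus $L(r\beta_i)\in\mathbb Z$ for all $i$, and $N(r)M_Z$ has integer coefficients.

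For (3) I would, for each odd $r$, assemble an lc-trivial fibration whose discriminant realizes, at distinct points of the base, every prime power $q^{s(q)}\le 2r$ as a factor of the relevant denominator, while arranging that every denominator still divides $L$. The building block is the local model behind the counterexample: for a rational-curve fibre one has $\beta_o=1-\operatorname{lct}$ of the fibre, and the expression for this log canonical threshold in terms of the pullbacks of $K_X$, of the fibre and of $B$ (the device introduced for Theorem \ref{mainprop}) lets one prescribe, through the local multiplicities in a resolution and the horizontal coefficients of $B$, a point $p_q$ at which $q^{s(q)}$ divides the denominator of the corresponding moduli coefficient. Once every such prime power occurs, the least common multiple of the denominators of $M_Z=\sum c_p p$ is at least $\prod_{q\le 2r}q^{s(q)}=L$, so $V\ge L$; since the example is built so that every denominator divides $L$ we also have $V\mid L$, whence $V=L=N(r)/r$. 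Throughout one transfers between the denominators of $B_Z$ and of $M_Z$ using the equivalence of the remark, which identifies them up to their $r$-parts.

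Part (1) then follows from (3). Since $N(r)/r=\operatorname{lcm}(1,\dots,2r)\ge 2^{2r-1}$ by the standard elementary lower bound for the least common multiple, and $2^{2r-1}$ eventually dominates any polynomial $r^{N+1}$, for each fixed $N$ one selects an odd $r$ with $2^{2r-1}\ge r^{N+1}$; the fibration produced in (3) then satisfies $V=N(r)/r\ge r^{N+1}$, so no polynomial bound in $r$ can control the denominators of $M_Z$.

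The step I expect to be the main obstacle is the construction in (3): one must realize each prescribed prime-power denominator by an honest lc-trivial fibration and verify that assembling the local pieces neither raises the Cartier index $r$ of the generic fibre nor injects spurious factors that would push the global least common multiple above $\prod_{q\le 2r}q^{s(q)}$. The delicate points are the exact bookkeeping of the resolution multiplicities entering each $\operatorname{lct}$, and the careful tracking of the denominator of $M_Z$ through the equivalence $Ir M_Z\in\mathbb Z\iff Ir B_Z\in\mathbb Z$, which controls only the $r$-part and so must be supplemented to pin down $V$ exactly.
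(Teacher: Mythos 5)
Your part (2) is correct and is essentially the paper's own argument: Theorem \ref{mainprop} gives, for each coefficient, some $l_i\leq 2r$ with $rl_i\beta_i\in\mathbb{Z}$, so the denominator of $r\beta_i$ divides $\mathrm{lcm}(1,\dots,2r)=\prod_{q\leq 2r}q^{s(q)}=N(r)/r$, and the lemma comparing $IrB_Z$ and $IrM_Z$ transfers this to $M_Z$; the paper runs the same computation prime by prime, and your identification of $N(r)/r$ with $\mathrm{lcm}(1,\dots,2r)$ is a clean way to package it. The genuine gap is part (3): what you offer is a plan, not a construction, and the entire content of the paper's proof is the explicit choice you leave unspecified. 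The paper takes $d=2r$, $B=\frac{1}{r}\bar{D}$ with $\bar{D}$ the closure of the explicit family $D_0$ (the same device as in part (1)), and, decisively, tangency orders $l_q=2^{h(q)}q^{s(q)}$ with $h(q)=\max\{h\mid r\leq 2^hq^{s(q)}\leq 2r\}$. This padding by powers of $2$ is the missing idea in your sketch: the prime power $q^{s(q)}$ you want to realize is in general far below $d/2=r$, where Lemma \ref{3ind} does not apply (it requires a tangency order $l$ with $d/2\leq l<d$), and multiplying by $2^{h(q)}$ moves $l_q$ into the admissible window without polluting the odd part of the denominator precisely because $r$ is odd --- this is where the hypothesis ``$r$ odd'' enters, which your outline never uses. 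With that choice Lemma \ref{3ind} yields $\gamma_q=1+(r-l_q)/(l_qr)$, whose denominator is $2^{h(q)}q^{s(q)-s'(q)}r$ with $q^{s'(q)}=\gcd(l_q,r)$, and the lcm over all primes $q\leq 2r$ is exactly $N(r)/r$. Your caveat that the equivalence $IrB_Z\in\mathbb{Z}\iff IrM_Z\in\mathbb{Z}$ pins $V$ down only up to its $r$-part is legitimate --- indeed the paper itself only establishes the divisibility $N(r)/r\mid V$ and is silent on the matching upper bound --- but flagging an obstacle is not the same as resolving it.

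Because you deduce (1) from (3), the gap propagates to (1). Note that the paper proves (1) independently and more simply, via Lemma \ref{spiegone}: one chooses $N$ tangency orders $l_1,\dots,l_N\in[d/2,d)$ pairwise coprime and coprime to $d$, so that each log canonical threshold equals $1+1/l_i-2/d$, whence $l_1\cdots l_Nd\mid V$ and $V\geq 2r^{N+1}$; this works for any admissible even $d$, with $r=d/2$ of moderate size. Your route --- choose an odd $r$ with $\mathrm{lcm}(1,\dots,2r)\geq 2^{2r-1}\geq r^{N+1}$ --- is logically sound granted (3) (the bound $\mathrm{lcm}(1,\dots,n)\geq 2^{n-1}$ is standard), and inverting the dependency (3)$\Rightarrow$(1) is a legitimate economy, but it produces examples only for enormous $r$ depending on $N$ and, as written, rests entirely on the unproven construction. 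To complete the proposal you must exhibit the divisor and verify the tangency orders and the exact denominators; the polynomial $D_0$ and the exponents $h(q)$ above are precisely what is needed.
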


In ~\cite{Tod} G. T. Todorov proves, 
in the case where the pair $(X,B)$ is klt over the generic point of $Z$,
the existence of an explicitely computable integer $I(r)$ such that $I(r)M_Z$
has integer coefficients using techniques from ~\cite{FM} where the existence of such an integer is proved in the case $B=0$.
Todorov's bound is considerably greater than 
the bound provided by Theorem \ref{mainteo}:
\hspace{1cm}

\begin{center}
\begin{tabular}{l|l|l}
r & I(r) & N(r) \\
\hline
3 & 120 & 60 \\
4 & 5040 & 420 \\
5 & 1441440 & 2520 \\
6 & 160626866400 & 27720 \\
7 & 288807105787200 & 360360 \\
8 & 6198089008491993412800 & 360360 \\
9 & 7093601304616933605068169600 & 12252240 \\
10 & 194603155528763897469736633833782400 & 232792560 \\
\end{tabular}
\end{center}

An explicit global bound on the denominators of $M_Z$
is important in order to obtain effective results
for the pluri-log-canonical maps
of pairs with positive Kodaira dimension.
For instance the bounds in ~\cite[Theorem 6.1]{FM} and ~\cite[Theorem 4.2]{Tod}
can be immediately improved by using Theorem \ref{mainteo}.\\
One of the difficulties of studying the moduli part of lc-trivial fibrations
with fibres of dimension greater than one is the lack of a moduli space for the fibres.
It is therefore worth noticing that
our arguments make no use of $\mathcal{M}_{0,n}$.
We hope that our more elementary approach could lead to a better understanding
of the moduli divisor for fibrations with higher dimensional fibres.

\bigskip

\thanks{{\bfseries Acknowledgements.}
This work has been written during my first year of Ph.D at the Universit\'e de Strasbourg.
I would like to thank my advisor Gianluca Pacienza for proposing me to work on this subject and for constantly supporting me with his invaluable remarks and his advice.
I would also like to thank Florin Ambro for the many fruitful conversations we have had together as well 
as Andreas H\"oring for all the useful comments he has made to me.
}

\section{Notations and preliminaries}
\subsection{Notations, definitions and known results}
We will work over $\mathbb{C}$,
In the following $\equiv$, $\sim$ and $\sim_{\mathbb{Q}}$ will respectively indicate
numerical, linear and $\mathbb{Q}$-linear equivalence of divisors.
The following definitions are taken from ~\cite{KM}.

\begin{defi}
Let $(X,B)$ be a pair, $B=\sum b_i B_i$ with $b_i \in\mathbb{Q}$. 
Suppose that $K_X+B$ is $\mathbb{Q}$-Cartier.
Let $\nu\colon Y\rightarrow X$ be a birational morphism, $Y$ normal.
We can write
$$K_Y\equiv \nu^{\ast}(K_X+B)+\sum a(E_i,X,B) E_i.$$
where $E_i\subseteq Y$ are distinct prime divisors and $a(E_i,X,B)\in\mathbb{R}$.
Furthermore we adopt the convention that a nonexceptional divisor $E$ appears in the sum
if and only if $E=\nu_{\ast}^{-1}B_i$ for some $i$
and then with coefficient $a(E,X,B)=-b_i$.\\
The $a(E_i,X,B)$ are called discrepancies.
\end{defi}

\begin{defi}
Let $(X,B)$ be a pair and $f\colon X\rightarrow Z$ be a morphism.
Let $o\in Z$ be a point (possibly of positive dimension).
A log resolution of $(X,B)$ over $o$
is a birational morphism $\nu\colon X'\rightarrow X$
such that for all $x\in f^{-1}o$
the divisor $\nu^{\ast}(K_X+B)$ is
simple normal crossing at $x$.
\end{defi}

\begin{defi}
We set
$$\rm{discrep}(X,B)
=\inf \{a(E,X,B)\;|\; E\, exceptional\, divisor\, over\, X\}.$$
A pair $(X,B)$ is defined to be
\begin{itemize}
\item klt (kawamata log terminal) if $\rm{discrep}(X,B)>-1$,
\item lc (log canonical) if $\rm{discrep}(X,B) \geq -1.$
\end{itemize}
\end{defi}

\begin{defi}
Let $f\colon (X,B)\rightarrow Z$ be a morphism and $o\in Z$ a point.
For an exceptional divisor $E$ over $X$ we set
$c(E)$ its image in $X$.
We set $$\rm{discrep}_o(X,B)
=\inf \{a(E,X,B)\;|\; E\, {\rm exceptional}\, {\rm divisor}\, {\rm over}\, X,\; f(c(E))=o\}.$$
A pair $(X,B)$ is defined to be
\begin{itemize}
\item klt over $o$ (kawamata log terminal) if $\rm{discrep}_o(X,B)>-1$,
\item lc over $o$ (log canonical) if $\rm{discrep}_o(X,B) \geq -1.$
\end{itemize}
\end{defi}

\begin{defi}
Let $(X,B)$ be an lc pair, $D$ an effective $\mathbb{Q}$-Cartier $\mathbb{Q}$-divisor.
The log canonical threshold of $D$ for $(X,B)$ is
$$\gamma=\sup\{t\in\mathbb{R}^+|\,(X,B+tD)\: is\: lc\}.$$
\end{defi}

\begin{defi}
Let $(X,B)$ be a lc pair, $\nu\colon X'\rightarrow X$ a log resolution.
Let $E\subseteq X'$ be a divisor on $X'$ of discrepancy $-1$. Such a divisor is called a log canonical place. 
The image $\nu(E)$ is called center of log canonicity of the pair.
If we write $$K_{X'}\equiv \nu^{\ast}(K_X+B)+E,$$
we can equivalently define a place as an irreducible component of $\lfloor -E \rfloor$.
\end{defi}

\begin{defi}
Let $(X,B)$ be a pair and $\nu\colon X'\rightarrow X$
a log resolution of the pair.
We set $$A(X,B)=K_{X'}-\nu^{\ast}(K_X+B)$$ and
$$A(X,B)^{\ast}=A(X,B)+\sum_{E\,{\rm place}}E.$$
\end{defi}

\begin{defi}
A lc-trivial fibration $f \colon (X,B) \rightarrow Z$ consists of a contraction
of normal varieties $f \colon X \rightarrow Z$ and of a log pair $(X,B)$ satisfying
the following properties:
\begin{enumerate}
\item $(X,B)$ has log canonical singularities over a big open subset $U\subseteq Z$;
\item ${\rm rank}\, f'_{\ast}\,\mathcal{O}_X(\lceil A^{\ast}(X,B)\rceil) = 1$
where $f'=f\circ\nu$ and $\nu$ is a given log resolution of the pair $(X,B)$;
\item there exists a positive integer $r$, a rational function $\varphi\in k(X)$
and a $\mathbb{Q}$-Cartier divisor $D$ on $Z$ such that
$$K_X + B +\frac{1}{r}(\varphi) = f^{\ast}D.$$
\end{enumerate}
\end{defi}

\begin{oss}\label{cartindex}
{\rm
The smallest possible $r$ is the minimum of the set
$$\{m\in\mathbb{N}|\,m(K_X+B)|_F\sim 0\}$$
that is the Cartier index of the fibre.
We will always assume that the $r$ that appears in the formula is the smallest.
}
\end{oss}

\begin{defi}
Let $p\subseteq Z$ be a codimension one point.
The log canonical threshold of $f^{\ast}(p)$ with respect to the pair $(X,B)$ is
$$\gamma_p=\sup\{t\in\mathbb{R}|\,(X,B+tf^{\ast}(p)) {\rm \:is\: lc\: over\:} p\}.$$
We define the {\rm discriminant} of $f \colon (X,B) \rightarrow Z$ as
\begin{eqnarray}\label{discriminant}
B_Z&=&\sum_{p}(1-\gamma_p)p.
\end{eqnarray}
\end{defi}
We remark that, since the above sum is finite, $B_Z$ is a $\mathbb{Q}$-Weil divisor.
\begin{oss}
{\rm
In what follows we will treat the case where $f\colon X\rightarrow Z$
is a $\mathbb{P}^1$-bundle over a smooth curve.
We write $B$ as the sum of its vertical part and its horizontal part,
$B=B^h+B^v$.
Since every fibre of $f$ is irreducible there exists a $\mathbb{Q}$-divisor $\Delta$ on $Z$
such that $B^v=f^{\ast}\Delta$.
This implies that also $f\colon(X,B^h)\rightarrow Z$
is an lc-trivial fibration
and let $B'_Z$ and $M'_Z$ be its discriminant and moduli part.
Then by ~\cite[Remark 3.3]{Amb04}
$B_Z=B'_Z+\Delta$ and $M_Z=M'_Z$.
Thus we can suppose $B=B^h$.
In this case,
if we write $B=\sum b_i B_i$, the smallest possible $r$
is the least common multiple of the denominators of the $b_i$'s
and for all $i$ $$b_i\in\frac{1}{r}\mathbb{Z}.$$
}
\end{oss}
\begin{oss}
{\rm
Let $f\colon (X,B)\rightarrow Z$ be an lc-trivial fibration on a smooth curve
and let $o\in Z$ be a point.
Let $F=f^{\ast}o$ be its fibre.
Let $\delta\colon \hat{X}\rightarrow X$ be a log resolution of $(X, B+f^{\ast}o)$
over $o$, that is, if $E$ is an exceptional curve of $\delta$
then $f(\delta(E))=o$.
Then we have
$$
\begin{array}{rcl}
\delta^{\ast}K_X&=&K_{\hat{X}}-\sum e_i E_i\\
\delta^{\ast}F&=&\tilde{F}+\sum a_i E_i\\
\delta^{\ast}B&=&\tilde{B}+\sum \alpha_i E_i
\end{array}
$$
The resolution $\delta$ is a log-resolution over $o$ also for the pair $(X, B+tF)$
for all $t$.
If $(X,B+tF)$ is lc then by definition for all $i$
$$-e_i+ta_i+\alpha_i\leq 1.$$
Since the coefficient of $F$ has to be less or equal than one, we also have $t\leq 1$.
Therefore
$$t\leq\min\{1,\min_i\{\frac{1}{a_i}(1+e_i-\alpha_i)\}\}.$$ 
}
\end{oss}

\begin{defi}
Fix $\varphi\in\mathbb{C}(X)$ such that $K_X + B +\frac{1}{r}(\varphi) = f^{\ast}D$.
Then there exists a unique divisor $M_Z$ such that we have
\begin{eqnarray}\label{cbf}
K_X + B +\frac{1}{r}(\varphi) &=& f^{\ast}(K_Z+B_Z+M_Z)
\end{eqnarray}
where $B_Z$ is as in (\ref{discriminant}).
The $\mathbb{Q}$-Weil divisor $M_Z$ is called the {\rm moduli part}.
\end{defi}
We have the two following results.\\
\begin{teo}~\cite[Theorem 2.5]{Amb04}, ~\cite{Corti}\label{nefness}
Let $f \colon(X,B)\rightarrow Z$ be a lc-trivial fibration.
Then there exists a proper birational morphism
$Z'\rightarrow Z$ with the following properties:
\begin{description}
\item[(i)] $K_{Z'}+B_{Z'}$ is a $\mathbb{Q}$-Cartier divisor, 
and $\nu^{\ast}(K_{Z'}+B_{Z'}) = K_{Z''}+B_{Z''}$
for every proper birational morphism $\nu \colon Z''\rightarrow Z'$.
\item[(ii)] $M_{Z'}$ is a nef $\mathbb{Q}$-Cartier divisor and $\nu^{\ast}(M_{Z'}) = M_{Z''}$ for every
proper birational morphism $\nu \colon Z''\rightarrow Z'$.
\end{description}
\end{teo}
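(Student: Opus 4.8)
The plan is to prove the two assertions by entirely different means: (i), concerning the discriminant, is essentially formal, while (ii), concerning the moduli part, rests on the semipositivity of Hodge-theoretic constructions and is the real content of the theorem.

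For (i), I would package the discriminants on all proper birational models $W\to Z$ into a single b-divisor $\mathbf{B}$, whose trace on each $W$ is the discriminant $B_W$ of the lc-trivial fibration induced by base change (after normalization and resolution of the total space). Each coefficient $1-\gamma_p$ lies in $[0,1]$ by log canonicity over the big open subset, and is a valuative invariant computed from a fixed log resolution of $(X,B)$. The key finiteness input is that only the finitely many divisorial valuations realized on such a resolution can carry a discriminant coefficient not already predicted by crepant pullback; their coefficients are pinned down by the discrepancies and the lc-places appearing there. Hence after finitely many blow-ups of $Z$ the b-divisor $\mathbf{B}$ becomes the pullback of its trace on some model $Z'$. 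Taking $Z'$ in addition smooth makes $K_{Z'}+B_{Z'}$ $\mathbb{Q}$-Cartier, and the descent of $\mathbf{B}$ is precisely the crepancy statement $\nu^{\ast}(K_{Z'}+B_{Z'})=K_{Z''}+B_{Z''}$.

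For (ii), the natural route is the Hodge-theoretic one of Kawamata and Ambro. First I would apply the covering trick: since $r(K_F+B_F)\sim 0$ on the generic fibre, an $r$-fold cyclic cover of $X$ adapted to $B$, followed by normalization and resolution, produces a family of smooth projective varieties whose attached Hodge bundle computes $M_Z$, thereby absorbing $B$ and reducing the problem to a semipositivity statement. Concretely, after a base change $Z'\to Z$ realizing semistable (hence unipotent-monodromy) reduction of the associated variation of Hodge structure, the moduli part becomes the pullback of the bottom piece of the Hodge filtration, expressed as a multiple of the determinant of the canonically extended Hodge bundle $f_{\ast}\omega_{X/Z}$. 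Fujita--Kawamata semipositivity then gives that this canonical extension is nef, whence $M_{Z'}$ is nef. The compatibility $\nu^{\ast}(M_{Z'})=M_{Z''}$ follows from the functoriality of the variation of Hodge structure and of its canonical extension under pullback of the base, once one is above the semistable model.

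The main obstacle is exactly the semipositivity in (ii): the class $M_Z$ carries no a priori positivity, and nefness emerges only after passing to a variation of Hodge structure with unipotent local monodromy and invoking the deep nefness of canonically extended Hodge bundles (Fujita's semipositivity of $f_{\ast}\omega_{X/Z}$ and Kawamata's extension to the quasi-unipotent case). Controlling the canonical extension under the semistable base change, and descending the resulting nef class back to a single model over $Z$, carries all the technical weight; by contrast (i) is a formal consequence of the valuative definition of the discriminant together with the finiteness of the lc-places on a fixed resolution.
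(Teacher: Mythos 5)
This statement is quoted in the paper as known background, with no proof given there: the paper defers entirely to ~\cite[Theorem 2.5]{Amb04} and to Koll\'ar's chapter in ~\cite{Corti}, and those references prove it exactly along the lines you sketch --- descent of the discriminant b-divisor computed valuatively on a fixed log resolution for (i), and the cyclic covering trick, reduction to a variation of Hodge structure with unipotent local monodromies, and Fujita--Kawamata semipositivity of the canonical extension for (ii). Your outline therefore matches the standard (cited) proof in approach; the only point to watch is that semistable reduction is a \emph{finite} base change rather than a birational one, so one must invoke Ambro's base-change properties of $M$ under generically finite pullback to descend nefness back to a birational model $Z'\to Z$, and the relevant Hodge bundle is the rank-one bottom piece of the eigenspace VHS of the $r$-fold cover rather than $\det f_{\ast}\omega_{X/Z}$ itself.
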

\begin{teo}[Inverse of adjunction] ~\cite[Proposition 3.4]{Amb99}\label{invadj} Let $f\colon(X,B)\rightarrow Z$ be a lc-trivial
fibration. 
Then $(Z,B_Z )$ has klt (lc)
singularities in a neighborhood of a point $p\in Z$ if and only if $(X,B)$ has
klt (lc) singularities in a neighborhood of $f^{-1}p$.
\end{teo}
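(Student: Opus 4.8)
The plan is to bypass the canonical bundle formula entirely and work directly with the defining property of the discriminant in (\ref{discriminant}), reducing the statement to a codimension-one computation on birational models of $Z$ and then descending the resulting equivalence to $(X,B)$ by \emph{crepant} pullback. First I would observe that both sides of the desired equivalence are insensitive to passing to a birational model of $Z$. Indeed, klt (resp.\ lc) of $(Z,B_Z)$ near $p$ is tested by the discrepancies $a(D,Z,B_Z)$ of all divisorial valuations $D$ whose center on $Z$ meets a neighborhood of $p$; by Theorem \ref{nefness}(i) these discrepancies may be computed on any model $Z'\to Z$ on which $D$ becomes a prime divisor, since $K_{Z'}+B_{Z'}$ is crepant over $K_Z+B_Z$. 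Thus it suffices to control $a(D,Z',B_{Z'})$ for $D$ an arbitrary prime divisor lying over $p$ on an arbitrary model $Z'$.

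Next I would set up the base change. Given such a $Z'\to Z$, let $X'\to X\times_Z Z'$ be a log resolution and $f'\colon X'\to Z'$ the induced fibration, with boundary $B'$ defined by the crepant pullback $K_{X'}+B'=\nu^{\ast}(K_X+B)$. Then $f'$ is again an lc-trivial fibration, and by the very construction of the discriminant its discriminant on $Z'$ is precisely $B_{Z'}$. The dictionary is now read off from (\ref{discriminant}): the coefficient of $D$ in $B_{Z'}$ is $1-\gamma_D$, where $\gamma_D$ is the log canonical threshold of $f'^{\ast}D$ with respect to $(X',B')$ over the generic point of $D$. By the sign convention for boundary components in the definition of discrepancy, this gives $a(D,Z',B_{Z'})=\gamma_D-1$.

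The heart of the matter is then the local translation of $\gamma_D$ into a statement about $(X',B')$. Since $f'^{\ast}D$ is effective with support the fibre over $D$, the log canonical locus $\{t\mid (X',B'+t f'^{\ast}D)\text{ lc over the generic point of }D\}$ is a closed ray with right endpoint $\gamma_D$; hence $\gamma_D\geq 0$ if and only if $(X',B')$ is lc over the generic point of $D$, and, by the standard positivity of log canonical thresholds, $\gamma_D>0$ if and only if it is klt there. Combining this with $a(D,Z',B_{Z'})=\gamma_D-1$ yields $a(D,Z',B_{Z'})\geq -1$ (resp.\ $>-1$) if and only if $(X',B')$ is lc (resp.\ klt) over the generic point of $D$; and because $K_{X'}+B'=\nu^{\ast}(K_X+B)$ is crepant, this is equivalent to $(X,B)$ being lc (resp.\ klt) over the image point. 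Taking the infimum over all $D$ lying over $p$, the left-hand conditions assemble exactly into klt (resp.\ lc) of $(Z,B_Z)$ near $p$, while the right-hand conditions assemble into klt (resp.\ lc) of $(X,B)$ over a neighborhood of $p$, that is, in a neighborhood of $f^{-1}p$; this proves both implications at once.

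The step I expect to be the main obstacle is the birational compatibility underpinning the whole reduction: that after base change to $Z'$ and resolution the induced fibration remains lc-trivial and its discriminant is genuinely the birational transform $B_{Z'}$, so that the threshold $\gamma_D$ computed on $X'$ is the correct one and matches $a(D,Z',B_{Z'})$. This is exactly the crepant-descent content of Theorem \ref{nefness}(i) together with the construction of $B_Z$; once it is in place, the klt/lc equivalence follows formally as above. Note that the moduli part $M_Z$ never enters the argument: the singularities of $(Z,B_Z)$ are governed by the thresholds $\gamma_D$ alone, and $M_Z$, being birationally invariant, cancels out of every discrepancy comparison.
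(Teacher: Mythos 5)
You should first be aware that the paper contains no proof of this statement: it is quoted as a known result from ~\cite[Proposition 3.4]{Amb99}, so your attempt can only be compared with the standard (Ambro-style) argument, not with anything in the text. Your skeleton is indeed that standard argument: read the coefficient of a prime divisor $D$ on a model $Z'$ as $1-\gamma_D$, identify $a(D,Z',B_{Z'})=\gamma_D-1$ through the crepant base-change property underlying Theorem \ref{nefness}(i), and translate $\gamma_D\geq 0$ (resp.\ $\gamma_D>0$) into lc-ness (resp.\ klt-ness) of the crepant pullback $(X',B')$ over the generic point of $D$. Those individual equivalences are correct; for the klt one you need the small but necessary remark that any divisor over $X'$ centered over $\eta_D$ has multiplicity along the pullback of $f'^{\ast}D$ bounded below by the least coefficient of the fibre, which your appeal to ``standard positivity'' gestures at.

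The genuine gap is the final ``assembly'' step, in the direction from the base to the total space. Your conditions are indexed by prime divisors $D$ on models of $Z$, and they control exactly those divisorial valuations $E$ over $X$ that lie over the generic point of some such $D$, i.e.\ whose restriction $\nu_E|_{k(Z)}$ is a divisorial valuation of $k(Z)$. To conclude that $(X,B)$ is lc (klt) near $f^{-1}p$ you must bound $a(E,X,B)$ for \emph{every} $E$ whose center meets $f^{-1}(U)$, including those with $f(c(E))$ of codimension at least $2$ in $Z$, and for these it is not automatic that $E$ lies over $\eta_D$ for some $D$ on some model: one needs the lemma that the restriction of a divisorial valuation of $k(X)$ to $k(Z)$ is either trivial or again divisorial (an Abhyankar-inequality/Zariski argument, cf.\ ~\cite[Lemma 2.45]{KM}). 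This is precisely where the content of inverse adjunction sits, and ``taking the infimum over all $D$'' does not by itself see such $E$. Separately, valuations whose center dominates $Z$ restrict to the trivial valuation and are invisible to every $\gamma_D$. For the lc half they are absorbed by condition (1) in the definition of an lc-trivial fibration (log canonicity over a big open subset), but for the klt half your biconditional fails as literally stated: take $X=Z\times\mathbb{P}^1$ with $B=Z\times\{0\}+Z\times\{\infty\}$, a coefficient-one sum of two sections. This is an lc-trivial fibration with $B_Z=0$, so $(Z,B_Z)$ is klt, while $(X,B)$ is not klt in any neighborhood of any fibre. Hence ``klt in a neighborhood of $f^{-1}p$'' must be read as ``klt over a neighborhood of $p$'' in the sense of ${\rm discrep}_o$, or one must assume $(X,B)$ klt over the generic point of $Z$ as in Ambro's setting; your last sentence, which silently identifies the two notions, is exactly where the argument would break.
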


The Formula (\ref{cbf}), with the properties stated in Theorem \ref{nefness} and Theorem \ref{invadj} is called \textit{canonical bundle formula}.

\subsection{A useful result on blow-ups on surfaces}
Let $X$ be a smooth surface.
Let $\delta\colon\hat{X}\rightarrow X$ be a sequence of blow-ups,
$\delta=\varepsilon_h\circ\ldots\circ \varepsilon_1$ and denote $p_i$ the point blown-up by $\varepsilon_i$.
In what follows by abuse of notation we will denote with $E_i$
the exceptional curve of $\varepsilon_i$
as well as its birational transform in further blow-ups.
\textbf{In what follows we will suppose that in ${\rm Exc}(\delta)$ there is just one $(-1)$-curve.}
Since the exceptional curve $E_h$ of $\varepsilon_h$ is a $(-1)$-curve
it is the only exceptional curve of ${\rm Exc}(\delta)$.
Suppose that the first point $p_1$ that is blown-up
belongs to a smooth curve $F$.
We will denote by $\tilde{F}$ the strict transform of $F$
by $\varepsilon_i\circ\ldots\circ \varepsilon_1$ for all $i$.

\begin{lem}\label{3ind}
Let $f\colon (X,B)\rightarrow Z$ be a $\mathbb{P}^1$-bundle on a smooth curve $Z$
and suppose that $B=(2/d)D$ 
where $D$ is a reduced divisor such that $DF=d$.
Suppose moreover that there is a point $o\in Z$
such that $D$ is tangent to $F=f^{\ast}o$ at a smooth point of $D$
with multiplicity $d/2\leq l<d$.
Then the log canonical threshold
$$\gamma:=\gamma_o=\sup\{t\in\mathbb{R}|\,((X,B),tf^{\ast}o)\; {\rm is} \; {\rm lc} \; {\rm  over}\; o\}$$
has the following expression
$$\gamma = 1+\frac{1}{l}-\frac{2}{d}.$$
\end{lem}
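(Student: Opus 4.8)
The plan is to read off $\gamma$ from a single log resolution of $(X,B+f^{\ast}o)$ over $o$, using the inequality recorded in the Remark preceding the statement: if $\delta\colon\hat X\to X$ is such a resolution and we write $\delta^{\ast}K_X=K_{\hat X}-\sum e_iE_i$, $\delta^{\ast}F=\tilde F+\sum a_iE_i$ and $\delta^{\ast}B=\tilde B+\sum\alpha_iE_i$, then $\gamma=\min\{1,\min_i\frac{1}{a_i}(1+e_i-\alpha_i)\}$. Since $\gamma_o$ is the minimum of the local thresholds at the points of $F$, I would first observe that away from the tangency point $p_1$ the reduced divisor $D$ meets $F$ transversally, so there $(X,B+tF)$ is already simple normal crossing with coefficients $2/d\le1$ and $t$; such points only impose $t\le1$ and cannot produce a threshold smaller than the one at $p_1$. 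The whole computation therefore reduces to the analytic germ at $p_1$, where I choose coordinates $(x,y)$ so that $F=\{x=0\}$ and, $D$ being smooth and tangent to $F$ to order $l$, $D=\{x=y^{l}\}$.

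Next I would build the resolution by a chain of $l$ point blow-ups. First I blow up $p_1$, obtaining $E_1$; in the chart capturing the common tangent direction one finds $\tilde F=\{x_1=0\}$, $\tilde D=\{x_1=y_1^{\,l-1}\}$ and $E_1=\{y_1=0\}$, so that $\tilde D$ and $\tilde F$ are now tangent of order $l-1$ and $E_1$ is transversal to both. The key point I would check is that this picture is \emph{self-similar}: at each stage the next center $p_{i+1}$ is the point where $\tilde D$ and $\tilde F$ are tangent, it lies on $E_i$ but on none of the earlier exceptional curves, and after blowing it up the tangency order of $\tilde D,\tilde F$ drops by one. Thus after the $(l-1)$-st blow-up the strict transforms of $D$ and $F$ meet $E_{l-1}$ in an ordinary triple point, and a final $l$-th blow-up separates it; the resulting configuration is simple normal crossing, so $\mathrm{Exc}(\delta)=E_1+\dots+E_l$.

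With this description the three sequences of multiplicities satisfy uniform recursions. Since $p_{i+1}$ meets exactly one previous exceptional curve $E_i$ (with multiplicity one) together with the smooth branches $\tilde F$ and $\tilde D$, the blow-up formulas give $e_{i+1}=e_i+1$, $a_{i+1}=a_i+1$ and $\mathrm{mult}_{E_{i+1}}\delta^{\ast}D=\mathrm{mult}_{E_i}\delta^{\ast}D+1$, each starting from the value $1$ at $E_1$; hence $e_i=a_i=\mathrm{mult}_{E_i}\delta^{\ast}D=i$ and $\alpha_i=\frac{2}{d}\,i$ for all $i$. Substituting into the formula from the Remark,
$$\gamma=\min\Big\{1,\ \min_{1\le i\le l}\frac{1+i-\tfrac{2i}{d}}{i}\Big\}=\min\Big\{1,\ \min_{1\le i\le l}\Big(\frac{1}{i}+1-\frac{2}{d}\Big)\Big\}.$$
The inner quantity is decreasing in $i$, so it is minimized at $i=l$, giving $1+\frac1l-\frac2d$; and the hypothesis $l\ge d/2$ forces $\frac1l\le\frac2d$, so this value is $\le1$ and the outer minimum with $1$ is attained by it. This yields $\gamma=1+\frac1l-\frac2d$.

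I expect the main obstacle to be the geometric bookkeeping of step two rather than the arithmetic of step three: one must verify carefully that it is the $\tilde D$–$\tilde F$ tangency (not a tangency with an exceptional curve) that persists, that precisely one previously-created exceptional curve passes through each new center, and that exactly $l$ blow-ups—including the last one resolving an ordinary triple point—are needed before the total transform becomes simple normal crossing. Once this self-similar structure is established, the recursions for $e_i,a_i,\alpha_i$ and the identification of $E_l$ as the minimizing valuation are routine.
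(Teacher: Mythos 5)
Your proof is correct and follows essentially the same route as the paper: the same chain of $l$ blow-ups over the tangency point, yielding $e_i=a_i=\mathrm{mult}_{E_i}\,\delta^{\ast}D=i$ (hence $\alpha_i=2i/d$), followed by the same application of the minimum formula from the preceding Remark, with the hypothesis $l\geq d/2$ guaranteeing that the minimum is attained at $i=l$ and does not exceed $1$. If anything you are more careful than the paper, which exhibits the resolution and multiplicities without comment; note only that your claim that $D$ meets $F$ transversally away from $p_1$ is not literally forced by the hypotheses (only that the remaining local intersection numbers are at most $d-l\leq l$, which still yields local thresholds $\geq 1+\frac{1}{l}-\frac{2}{d}$), a point the paper's proof passes over entirely.
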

\begin{proof}
A log resolution for the pair $(X,2/dD+\gamma_o F)$
over $o$ is a sequence of blow-ups $\delta=\varepsilon_l\circ\ldots\circ\varepsilon_1$
such that a picture of the $(l-1)$-th step is\\

\vspace{2cm}
\setlength{\unitlength}{1cm}
\begin{picture}(1,1)
\put(1,0){\line(0,1){2}}
\put(1,-0.5){$\tilde{F}$}
\put(2.2,0){$\tilde{D}$}
\put(1.4,1.1){$E_{l-1}$}
\put(2,1){\oval(2,1.8)[l]}
\put(1,1){\line(1,0){1.5}}
\put(2,1){\circle*{0.1}}
\put(3,1){\line(1,0){0.3}}
\put(3.6,1){\line(1,0){0.3}}
\put(4.2,1){\line(1,0){0.3}}
\put(4.8,1){\line(1,0){1.5}}
\put(5.3,1){\circle*{0.1}}
\put(5.7,1.1){$E_1$}
\linethickness{1mm}
\end{picture}
\vspace{1cm}

Then $$\delta^{\ast}D=\tilde{D}+\sum_{j=1}^l jE_j$$
and we have
$$\delta^{\ast}(\frac{2}{d}D)=\frac{2}{d}\tilde{D}+\frac{2}{d}\sum_{j=1}^l j E_j. $$
By definition $\alpha_l$ is the coefficient of $\delta^{\ast}(2/dD)$
at $E_l$, and by our computation it is $2l/d$.
Since
\begin{eqnarray*}
\gamma&=&\min\{1,\min_{i=1\ldots l}\{1+\frac{1}{i}-\frac{2}{d}\}\}\\
&=&\min\{1,1+\frac{1}{l}-\frac{2}{d}\}
\end{eqnarray*}
we obtain
$$\gamma=1+\frac{1}{l}-\frac{2}{d}.$$
\end{proof}

\section{Local results}

In this section we will be always in the situation where the fibres have dimension 1.
In this case, if $B=0$ the condition that $K_F$ is torsion implies
the generic fibre is an elliptic curve.
If $B\neq 0$ then $F$ has to be a rational curve
and the second condition in the definition of the lc-trivial fibration
implies that the horizontal part of $B$ is effective.

Thanks to the following lemma, studying the denominators of $M_Z$
is the same thing as studying the denominators of $B_Z$.
\begin{lem}
Let $f\colon (X,B)\rightarrow Z$ be an lc-trivial fibration whose general fibre is a rational curve.
Then for all $I\in \mathbb{N}$ $IrB_Z$ has integer coefficients if and only if $IrM_Z$
has integer coefficients.
\end{lem}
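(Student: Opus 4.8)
The plan is to read off the denominators of both $B_Z$ and $M_Z$ from the single $\mathbb{Q}$-divisor $K_Z+B_Z+M_Z$, and to show that after clearing the index $r$ this divisor is already integral on the base. Throughout I use that the denominators of $B_Z$ and $M_Z$ are detected at the codimension-one points of $Z$ and are unchanged under birational modifications of $X$ over $Z$ (the discriminant is defined through log canonical thresholds, which are birational invariants of the pair over $Z$, and $M_Z$ is well defined by Theorem~\ref{nefness}). Hence, exactly as in the preliminary remarks, I may assume that $Z$ is a smooth curve, that $f$ is a $\mathbb{P}^1$-bundle with $X$ a smooth surface, and that $B=\sum b_i B_i$ is horizontal with $b_i\in\frac1r\mathbb{Z}$; a higher-dimensional $Z$ is handled by localising at the generic point of each prime divisor, or by restricting to a general curve meeting it.

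Set $D=K_Z+B_Z+M_Z$. Multiplying the canonical bundle formula (\ref{cbf}) by $r$ gives $f^{\ast}(rD)=rK_X+rB+(\varphi)$. The right-hand side has integer coefficients: $rK_X$ is integral because $X$ is smooth, $rB$ is integral because $b_i\in\frac1r\mathbb{Z}$, and $(\varphi)$ is a principal divisor. First I would deduce that $rD$ itself has integer coefficients. This is the only place where the hypothesis on the fibres enters: because the general fibre is a rational curve the fibration has no multiple fibre, so $f^{\ast}p$ is a reduced prime divisor for every point $p$, and the coefficient of $f^{\ast}(rD)$ along $f^{\ast}p$ is exactly the coefficient of $rD$ at $p$. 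Concretely, a section $\sigma\colon Z\to X$ of the $\mathbb{P}^1$-bundle satisfies $f\circ\sigma=\mathrm{id}_Z$, whence $\sigma^{\ast}f^{\ast}(rD)=rD$; since $rK_X+rB+(\varphi)$ is an integral Cartier divisor on the smooth surface $X$, its pullback $\sigma^{\ast}\!\left(rK_X+rB+(\varphi)\right)$ is an integral divisor on $Z$, and therefore $rD$ has integer coefficients.

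Since $rK_Z$ has integer coefficients, it follows that $r(B_Z+M_Z)=rD-rK_Z$ has integer coefficients, and hence so does $Ir(B_Z+M_Z)$ for every $I\in\mathbb{N}$. The lemma is then immediate: writing $IrB_Z=Ir(B_Z+M_Z)-IrM_Z$ and $IrM_Z=Ir(B_Z+M_Z)-IrB_Z$, we see that $IrB_Z$ has integer coefficients if and only if $IrM_Z$ does.

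The step I expect to be the genuine point, rather than the bookkeeping, is the passage from the integrality of $f^{\ast}(rD)$ to that of $rD$, i.e. the absence of multiple fibres. This is what makes the statement special to rational (genus-zero) fibres: for an elliptic fibration $f^{\ast}p$ may be a multiple fibre, the coefficient of $rD$ at $p$ could then carry a denominator dividing the multiplicity, and this descent argument would break down. The remaining care is to justify the reduction to a $\mathbb{P}^1$-bundle over a curve without altering the denominators of $B_Z$ and $M_Z$, for which I would appeal to the invariance statements recalled at the start.
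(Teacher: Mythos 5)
Your core computation is exactly the paper's: clear the index $r$ in (\ref{cbf}), observe that $r K_X + r B + (\varphi)$ has integer coefficients, descend the integrality of $f^{\ast}\bigl(r(K_Z+B_Z+M_Z)\bigr)$ to $r(K_Z+B_Z+M_Z)$ along a fibre component of multiplicity one, and finish by the difference argument. Your diagnosis that the genus-zero input is the absence of multiple fibres also matches the paper, whose proof hinges on the observation that every fibre of the resolved fibration contains a component of coefficient one.

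The gap is in your opening reduction, and it is a real one. You justify passing to a $\mathbb{P}^1$-bundle by saying the discriminant is built from log canonical thresholds, ``which are birational invariants of the pair over $Z$''. That is false as stated: thresholds, and hence $B_Z$, are invariant only under \emph{crepant} modifications, i.e.\ when the pair is transported by $K_{\hat X}+\hat B=\nu^{\ast}(K_X+B)$. The paper performs the one crepant move that is always available --- a resolution $\nu\colon\hat X\to X$ with $\hat B$ defined by crepant pullback --- and then deliberately does \emph{not} contract down to the $\mathbb{P}^1$-bundle $X'$: it only uses the existence of the birational morphism $\hat X\to X'$ over $Z$ to conclude that each fibre of $\hat f$ has an irreducible component of coefficient one, and reads the coefficients of $r(K_{\hat X}+\hat B)+(\varphi)=r\hat f^{\ast}(K_Z+B_Z+M_Z)$ along that component; this replaces your appeal to irreducible fibres and to a section, and makes the contraction unnecessary. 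If you insist on contracting, you must prove the contraction $\mu\colon\hat X\to X'$ is crepant for $(\hat X,\hat B)$, which is true but for a reason you never invoke: $K_{\hat X}+\hat B$ is numerically trivial over $Z$, so writing $K_{\hat X}+\hat B=\mu^{\ast}(K_{X'}+\mu_{\ast}\hat B)+aE$ and intersecting with the contracted $(-1)$-curve $E$ forces $a=0$; without this (or a negativity-lemma argument) the minimal model carries a different discriminant and the reduction collapses. A secondary point to tighten: your blanket assumption that all $b_i$ lie in $\tfrac1r\mathbb{Z}$ is guaranteed by Remark \ref{cartindex} only for the \emph{horizontal} components, and discarding $B^v=f^{\ast}\Delta$ via the preliminary remark shifts the discriminant by $B_Z=B'_Z+\Delta$, so the integrality statement for the reduced fibration is not literally the one for the original unless the coefficients of $r\Delta$ are controlled --- a bookkeeping issue your write-up (like, admittedly, the paper's own terse final step, which tacitly needs $r\hat B$ to be integral along the chosen components) passes over in silence.
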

\begin{proof}
By cutting with sufficiently general hyperplane sections we can assume that $\dim Z=1$.\\
We write the canonical bundle formula for $f\colon (X,B)\rightarrow Z$:
$$K_X+B+\frac{1}{r}(\varphi)=f^{\ast}(K_Z+B_Z+M_Z).$$
Let $\nu\colon \hat{X}\rightarrow X$ be a desingularization of $X$, let $\hat{B}$
be the divisor defined by $$K_{\hat{X}}+\hat{B}=\nu^{\ast}(K_X + B)$$ and $\hat{f}=f\circ\nu$.
Then $\hat{f}\colon(\hat{X},\hat{B})\rightarrow Z$ is lc-trivial and has the same discriminant as $f$.
Moreover it has the same moduli divisor, since
$$K_{\hat{X}}+\hat{B}+\frac{1}{r}(\varphi)=\nu^{\ast}(K_X+B)+\frac{1}{r}(\varphi)=\hat{f}^{\ast}(K_Z+B_Z+M_Z).$$
The surface $\hat{X}$ is smooth and $\hat{X}\rightarrow Z$ has generic fibre $\mathbb{P}^1$
then there exists a birational morphism defined over $Z$
$$
\xymatrix{
\hat{X} \ar[d]_{\hat{f}} \ar[r]&X' \ar[ld]_{f'}\\
Z& }
$$ 
where $f'\colon X'\rightarrow Z$ is a $\mathbb{P}^1$-fibration.
It follows that each fibre of $\hat{f}$ has an irreducible component with coeffient one.
Then the statement follows from the equality
$$r(K_{\hat{X}}+\hat{B})+(\varphi)=r\hat{f}^{\ast}(K_Z+B_Z+M_Z).$$

\end{proof}

\begin{teo}\label{dim2}
Let $f\colon X\rightarrow Z$ be a $\mathbb{P}^1$-bundle with $\dim X=2$.
Let $o\in Z$ be a point and 
$\gamma$ be the log canonical threshold of $f^{\ast}o$ with respect to $(X,B)$.
Then there is a constant
$m\leq 2r^2$ such that $m \gamma$ is integer.
Such an $m$ is of the form $lr$ where $l\leq 2r$.
\end{teo}
\begin{proof}
The pair $(X,B+\gamma F)$ is lc and not klt, that is, it has an lc centre.
There are now two cases.\\
{\bfseries The centre has dimension one.}\\
If the centre has dimension one, then it is the whole fibre because all the fibres are irreducible.
In this case we have
$$1={\rm mult}_F (B+\gamma F)={\rm mult}_F (B)+\gamma$$
and since $r{\rm mult}_F (B)\in\mathbb{Z}$ also $r\gamma\in\mathbb{Z}$.\\
{\bfseries The centre has dimension zero.}\\
\textsl{Step 1}
Take $\nu\colon X'\rightarrow X$
a log resolution of $(X,B+\gamma F)$. Notice that the fibre over $o$ is a tree of $\mathbb{P}^1$'s.\\
Since $(X,B+\gamma F)$ is lc and not klt there is a place appearing between the leaves of the tree.
Write $\nu$ as a composition of blow-ups, set $\nu=\varepsilon_N\circ\ldots\circ\varepsilon_1$
and let $k$ be the minimum of the indices such that
the exceptional curve of $\varepsilon_k$ is a place for $(X,B+\gamma F)$, $P=E_k$.
Let $\eta$ be the composition $\varepsilon_k\circ\ldots\circ\varepsilon_1\colon X_1\rightarrow X$.
We have:
$$
\xymatrix{
X' \ar[dd]_{\nu} \ar[rd]\\
&X_1\ar[ld]_{\eta}\\
X & 
& }
$$
If the only $(-1)$-curve in $X_1$ is $P$
then we set $\hat{X}=X_1$ and $\delta:=\eta$.
Otherwise, if there is another $(-1)$-curve,
by the Castelnuovo's theorem we can contract it in a smooth way:

$$
\xymatrix{
X' \ar[ddd]_{\nu} \ar[rd]\\
&X_1\ar[d]\\
&X_2\ar[ld]\\
X & 
& }
$$
This process ends because in $X'$ there were finitely many $\nu$-exceptional curves.
Then we obtain a smooth surface $\hat{X}$ such that the only $(-1)$-curve
in $X$ is $P$.
We set $\delta\colon\hat{X}\rightarrow X$ and write $\delta=\varepsilon_h\circ\ldots\circ\varepsilon_1$.\\
\textsl{Step 2}
We have obtained $\hat{X}$ smooth with a diagram
$$
\xymatrix{
X' \ar[dd]_{\nu} \ar[rd]\\
&\hat{X}\ar[ld]_{\delta}\\
X & 
& }
$$
where $\hat{X}\rightarrow X$ is minimal in order to obtain
a log canonical place $P$ which has to be a $-1$-curve
and $\delta=\varepsilon_h\circ\ldots\circ\varepsilon_1$ is a sequence of blow ups.
Let $p_i$ be the point blown up by $\varepsilon_i$.
Let $\tilde{B}^j_i$ be the strict transform
of the component $B_i$ of $B$ at the step $j$
and $\tilde{B}^j$ be the strict transform of $B$. 
By abuse of notation we will denote by $\tilde{F}$
the strict transform of $F$ by every $\varepsilon_i$
and by $E_i$ the exceptional curve 
of $\varepsilon_i$ as well as its strict transform in the further blow-ups.
Notice that $P=E_h$.
In what follows we will adopt the following notation:
$$B=\sum b_i B_i;$$
$$\delta^{\ast}K_X=K_{\hat{X}}-\sum e_i E_i;\;\;\;\;
\delta^{\ast}B=\tilde{B}+\sum \alpha_i E_i;\;\;\;\; 
\delta^{\ast}F=\tilde{F}+\sum a_i E_i.$$
Here $\tilde{B}$ and $\tilde{F}$ denote the strict transform of $B$ and $F$.
Remark that for all $i$ we have 
\begin{eqnarray}\label{diesis}
\alpha_i&\in& \frac{1}{r}\mathbb{Z}.
\end{eqnarray}
Indeed $b_i\in 1/r\mathbb{Z}$ for all $i$ by Remark \ref{cartindex}. Equation (\ref{diesis})
follows from the fact that
$$\alpha_1=\sum_{B_i\ni p_1}b_i {\rm mult}_{p_1}B_i$$
and, for $l>1$, that $\alpha_l$ is a linear combination of the $\alpha_j$'s
with $j<l$
plus $\sum_{\tilde{B}^{l-1}_i\ni p_l}b_i {\rm mult}_{p_l}\tilde{B}^{l-1}_i$.

Since $E_h$ is a place we have
$$1={\rm mult}_{E_h}(\delta^{\ast}(K_X+B+\gamma F)-K_{\hat{X}})=-e_h+\alpha_h+\gamma a_h.$$
Since $e_h$ is an integer and $\alpha_h\in 1/r\mathbb{Z}$,
if we prove that $a_h\leq 2r$ we are done.
By the minimality of $\delta$ there exists a component $B_1$ of $B$
such that the strict transform $\tilde{B}_1^h$ of $B_1$ meets $E_h$,
that is $\tilde{B}_1^h E_h>0$.
Then
\begin{eqnarray*}
2r&\geq&B_1 F=\delta^{\ast}B_1 \delta^{\ast}F=\tilde{B}_1^h\delta^{\ast}F=\tilde{B}_1^h(\tilde{F}+\sum a_i E_i)\\
&\geq&a_h\tilde{B}_1^h E_h\geq a_h.
\end{eqnarray*}

\end{proof}

We can finally prove the main result.

\begin{proof} [Proof of Theorem \ref{mainprop}]
The statement in dimension 2 follows from Theorem \ref{dim2}
and ~\cite[Lemma 2.6]{Amb04}.
Indeed if $X\rightarrow Z$ is a fibration whose general fibre is a $\mathbb{P}^1$
and $X$ is smooth, then by the general theory of smooth surfaces 
there exists a birational morphism $\sigma\colon X\rightarrow X'$ where $X'$ is a 
$\mathbb{P}^1$-bundle. 
More precisely $X'$ is a minimal model of $X$ that is unique if
the genus of $Z$ is positive.\\
The general result follows from the one in dimension 2 by induction
on the dimension of the base.
Suppose now that the statement is true in dimension $n-1$
and let $X\rightarrow Z$ be a fibration of dimension $n$.
The set $$\mathcal{S}=\left\{
\begin{array}{l}
\scriptstyle 
o\; {\rm point\; of}\; Z\;{\rm of\; codimension\; 1\; such\; that\;
the\; log\; canonical} \\
\scriptstyle {\rm threshold\; of}\; f^{\ast}o\;
{\rm with\; respect\; to}\; (X,B)\; {\rm is\; different\; from}\; 1
\end{array} 
\right\}$$
is a finite set.\\
We fix then a point $o\in\mathcal{S}$.
By the Bertini theorem,
since $Z$ is smooth,
we can find a hyperplane section $H\subseteq Z$ such that
\begin{enumerate}
\item $H$ is smooth;
\item $H$ intersects $o$ transversally;
\item $H$ does not contain any intersection $o\cap o'$
where $o'\in\mathcal{S}\backslash\{o\}$.
\end{enumerate}
Set $$X_H=f^{-1}(H); \;\;f_H=f|_{X_H}; \;\;B_H=B|_{X_H};\;\; o_H=o\cap H.$$
The restriction $f_H\colon (X_H,B_H)\rightarrow H$
is again an lc-trivial fibration.
Then the log canonical threshold of $f_H^{\ast}o_H$ 
with respect to $(X_H,B_H)$ is equal to
the log canonical threshold of $f^{\ast}o$ 
with respect to $(X,B)$
and the theorem follows from the inductive hypothesis.
\end{proof}

Notice that even if in many cases $m=r$ is sufficient to have that $mM_Z$ has integer coefficients
there exist cases
in which a greater coefficient is needed.
\begin{ex}\label{cex}
{\rm
Let $\pi\colon X\rightarrow C$ be a $\mathbb{P}^1$-bundle on a curve $C$.
Let $X^0\rightarrow U$ be a local trivialization, where $U\subseteq C$
is an open subset and $X^0=\pi^{-1}U$. This means that there is a commutative diagram
$$
\xymatrix{
X^0 \ar[d]_{\pi} \ar[r]^{\sim}
&U\times\mathbb{P}^1\ar[ld]^{p_1}\\
U. }
$$
We can furthermore suppose that we have a local coordinate $t$ on $U$.
Let $[x:y]$ be coordinates on $\mathbb{P}^1$.
Set $$D=\{t y^d-x^{l}y^{d-l}-x^d=0\}\subseteq U\times\mathbb{P}^1$$
and let $\bar{D}$ be the Zariski closure of $D$ in $X$.\\
Consider the pair $(X,2/d\bar{D})$.
Then we have ${\rm deg}(K_X+2/d\bar{D})|_F=0$ and there exists
a rational function $\varphi$ such that we can write 
$$K_X+2/d\bar{D}+\frac{1}{r}(\varphi)=f^{\ast}(K_C+B_C+M_C)$$
where $r=d$ if $d$ is odd and $r=d/2$ if $d$ is even.
We want to compute now the coefficient of the divisor $B_C$ at the point $t=0$.
Its coefficient is $1-\gamma$
where $\gamma$ is the log canonical threshold of $((X,2/d\bar{D}),F)$.
A log resolution for the pair $(X,2/d\bar{D})$ over the point $t=0$
is given by the composition of $l$ blow-ups.
At the $(l-1)$-th step the picture is as follows\\

\vspace{2cm}
\setlength{\unitlength}{1cm}
\begin{picture}(1,1)
\put(1,0){\line(0,1){2}}
\put(1,-0.5){$\tilde{F}$}
\put(2.2,0){$\tilde{D}$}
\put(1.4,1.1){$E_{l-1}$}
\put(2,1){\oval(2,1.8)[l]}
\put(1,1){\line(1,0){1.5}}
\put(2,1){\circle*{0.1}}
\put(3,1){\line(1,0){0.3}}
\put(3.6,1){\line(1,0){0.3}}
\put(4.2,1){\line(1,0){0.3}}
\put(4.8,1){\line(1,0){1.5}}
\put(5.3,1){\circle*{0.1}}
\put(5.7,1.1){$E_1$}
\linethickness{1mm}
\end{picture}
\vspace{1cm}

We call $\delta\colon \hat{X}\rightarrow X$ this composition of blow-ups. We have
$$\delta^{\ast}K_X=K_{\hat{X}}-\sum_{i=1}^l iE_i\;\;\;\;
\delta^{\ast}\bar{D}=\tilde{D}+\sum_{i=1}^liE_i\;\;\;\;
\delta^{\ast}F=\tilde{F}+\sum_{i=1}^l iE_i,$$
where by abuse of notation we denote by $E_i$ the exceptional divisor of the $i$-th blow-up
as well as its strict transforms after the following blow-ups. 
Thus 
$$\delta^{\ast}(K_X+2/d\bar{D}+\gamma F)=K_{\hat{X}}+2/d\tilde{D}+\gamma\tilde{F}+\sum_{i=1}^l
i(-1+\gamma+2/d)E_i.$$
By Lemma \ref{3ind} we have $$\gamma=1+\frac{1}{l}-\frac{2}{d}.$$
So if we chose $l<d$ and such that $2l>d$, we obtain $\gamma=1-\frac{2l-d}{ld}$.
For $l=5$ and $d=9$ we have 
$\gamma=1-\frac{1}{45}\notin \frac{1}{12r}\mathbb{Z}$
contrary to the Prokhorov and Shokurov expectation.\\
Notice that this gives us an example also if we take $l$ to be any prime
greater or equal to 13 and $d=2l-1$.

To prove that the bound stated in Theorem \ref{dim2} is not far from being sharp,
we take $d$ even such that $d/2$ is odd and $l=d-1$. Then $r=d/2$ and
$$\gamma=1-\frac{2l-d}{ld}=1-\frac{2(2r-1)-2r}{2r^2-r}=1-\frac{2(2r-1)-2r}{2r^2-r}=1-\frac{2(r-1)}{(2r-1)r}.$$
Since $2(r-1)$ and $(2r-1)r$ are coprime,
the smallest integer $m$ such that $m\gamma$ is integer is $m=2r^2-r$.
}
\end{ex}

\section{Global results}
\begin{lem}\label{spiegone}
Let $f\colon X\rightarrow Z$ be a $\mathbb{P}^1$-bundle on a smooth curve $Z$.
Let $D\subseteq X$ be a reduced divisor such that
$f|_D\colon D\rightarrow Z$ is a ramified covering of degree $d$ with at least $N$
ramification points $p_1\ldots p_N$ that are smooth points for $D$.
Suppose that $d$ is even.
Suppose moreover that the ramification indices $l_1,\ldots, l_N$ at $p_1,\ldots,p_N$
satisfy the following properties:
\begin{enumerate}
	\item $2l_i\geq d$ for all $i$;
	\item $l_i$ and $l_j$ are coprime for all $i\neq j$;
	\item $l_i$ and $d$ are coprime for all $i$.
\end{enumerate}
Then
\begin{description}
\item[(i)] the fibration $$f\colon (X,2/dD)\rightarrow Z$$
is an lc-trivial fibration, in particular there exists a rational function $\varphi$
such that
$$K_X+\frac{2}{d}D+\frac{1}{r}(\varphi)=f^{\ast}(K_Z+M_Z+B_Z).$$
\item[(ii)] The Cartier index of the fibre is $r=d/2$.
\item[(iii)] Let $V$ be the smallest integer such that $VM_Z$ has integer coefficients.\\
Then $V\geq r^{N+1}$.
\end{description}
\end{lem}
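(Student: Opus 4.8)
The plan is to dispose of (i) and (ii) by a short fibrewise computation and to concentrate on (iii), where the arithmetic of the discriminant is transferred to the moduli divisor. First I would restrict everything to a general fibre $F\cong\mathbb P^1$. Since $D\to Z$ has degree $d$ and $D$ is horizontal, $D\cdot F=d$, so $B|_F=\frac2d D|_F$ has degree $2$ and $\deg(K_F+B|_F)=-2+2=0$; hence $(K_X+B)|_F\sim_{\mathbb Q}0$ and there is a $\varphi$ with $K_X+\frac2d D+\frac1r(\varphi)=f^{\ast}D'$ for a $\mathbb Q$-Cartier $D'$ on $Z$. Over the big open set away from the finitely many ramification points the fibres meet $D$ transversally, and as $D$ is reduced with $\frac2d\le 1$ the pair is log canonical there; the rank-one condition holds because on the generic $\mathbb P^1$ the sheaf $\mathcal O(\lceil A^{\ast}\rceil)$ has a one-dimensional space of sections. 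This gives (i). For (ii), $m(K_X+B)|_F\sim 0$ forces $mB|_F=\frac{2m}d D|_F$ to have integral coefficients, and since $D|_F$ is $d$ distinct reduced points on a general fibre this means $d\mid 2m$, i.e. $\frac d2\mid m$; the least such $m$ is $r=d/2$.

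For (iii) I would begin with the discriminant. Each ramification point lies over a point $o_i\in Z$ and is a smooth point of $D$ at which $F=f^{\ast}o_i$ is tangent of order $l_i$; since $2l_i\ge d$ and $\gcd(l_i,d)=1$ we have $r\le l_i<d$, so Lemma \ref{3ind} applies and gives $\gamma_{o_i}=1+\frac1{l_i}-\frac2d$. Thus the discriminant coefficient is $\beta_i=1-\gamma_{o_i}=\frac2d-\frac1{l_i}=\frac{l_i-r}{rl_i}$. The coprimality hypotheses show this is already in lowest terms, because $\gcd(l_i-r,l_i)=\gcd(r,l_i)=1$ and $\gcd(l_i-r,r)=\gcd(l_i,r)=1$; hence $\operatorname{denom}(\beta_i)=rl_i$, and $r\beta_i=1-\frac r{l_i}$ has denominator exactly $l_i$.

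Next I would pass from $B_Z$ to $M_Z$. Multiplying the canonical bundle formula by $r$ and using $\frac{2r}d=1$ gives $rK_X+D+(\varphi)=rf^{\ast}(K_Z+B_Z+M_Z)$, whose left-hand side is an integral divisor; as $f$ is a $\mathbb P^1$-bundle this forces $r(B_Z+M_Z)$ to be integral, so the coefficient $\mu_i$ of $M_Z$ at $o_i$ satisfies $r\mu_i\equiv -r\beta_i\equiv\frac r{l_i}\pmod 1$. Since $\gcd(r,l_i)=1$ this already yields $l_i\mid\operatorname{denom}(\mu_i)$, and by pairwise coprimality $\prod_i l_i\mid V$. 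To upgrade the resulting bound $V\ge r^{N}$ to $V\ge r^{N+1}$ I must also show $r\mid V$, i.e. that $\operatorname{denom}(\mu_i)$ carries the full factor $r$. Here the Remark preceding Theorem \ref{dim2} enters: $IrM_Z$ is integral iff $IrB_Z$ is, and since the smallest $I$ clearing $rB_Z$ is $\prod_i l_i$, the smallest multiple of $r$ clearing $M_Z$ is $r\prod_i l_i$; combined with the exact denominator $rl_i$ of the moduli coefficient this gives $V=r\prod_i l_i$. As $2l_i\ge d=2r$ forces $l_i\ge r$, we conclude $V=r\prod_i l_i\ge r\cdot r^{N}=r^{N+1}$.

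The main obstacle is precisely the step requiring $r\mid V$. The relation $r(B_Z+M_Z)$ integral only determines $\mu_i$ modulo $\frac1r\mathbb Z$, i.e. it pins down $r\mu_i\bmod 1$ but not $\mu_i\bmod 1$; an injudicious choice of $\varphi$ could absorb the factor $r$ into the integral part and leave $\operatorname{denom}(\mu_i)=l_i$. Removing this ambiguity, namely showing that for the correctly normalised (nef) moduli divisor the coefficient at $o_i$ is genuinely of the form $\tfrac{(\ast)}{rl_i}$ with numerator prime to $r$, is the crux. I expect to settle it by computing $\operatorname{ord}_{F_i}\varphi\bmod r$ from the fibrewise form $\varphi|_F=c\big/\prod_{j=1}^{d}(x-x_j)$ of the trivialising function, which reduces the claim to the arithmetic assertion $\gcd(\operatorname{ord}_{F_i}\varphi-1,\,r)=1$, while invoking the Remark and Theorem \ref{nefness} to guarantee the computation is carried out on the model where $M_Z$ is canonically defined.
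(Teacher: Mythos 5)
Your treatment of (i), (ii) and of the discriminant in (iii) is correct and follows the paper's route: the paper likewise disposes of (i)--(ii) by the fibrewise degree computation, observes that smoothness of $D$ at $p_i$ plus ramification forces tangency of order exactly $l_i$, and invokes Lemma \ref{3ind} to get $\gamma_{o_i}=1+\frac{1}{l_i}-\frac{2}{d}$. Your denominator arithmetic is in fact slightly sharper than the paper's: you correctly reduce $\beta_i=\frac{2l_i-d}{l_id}$ to lowest terms $\frac{l_i-r}{rl_i}$ and get exact denominator $rl_i$, where the paper asserts $l_id\mid V$ (the numerator $2l_i-d$ is even, so $l_id$ overstates by a factor $2$; the paper's final $V\geq 2r^{N+1}$ should accordingly be read as $V\geq r^{N+1}$, which is all the statement claims).

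The genuine gap is the one you yourself flag: the passage from $B_Z$ to $M_Z$. The relation $r(B_Z+M_Z)$ integral (equivalently the lemma ``$IrB_Z$ integral iff $IrM_Z$ integral'') pins down only $r\mu_i \bmod 1$, and from it you rigorously extract only $l_i\mid \mathrm{den}(\mu_i)$, hence $\prod_i l_i\mid V$ and $V\geq r^N$ --- one factor of $r$ short of the claim. Your intermediate assertion ``combined with the exact denominator $rl_i$ of the moduli coefficient this gives $V=r\prod_i l_i$'' is circular, since that exact denominator is precisely what is not established: writing $\mu_i=w/(rl_i)$ with $\gcd(w,l_i)=1$, nothing in your argument excludes $r\mid w$, in which case $\mathrm{den}(\mu_i)=l_i$. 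Your closing paragraph replaces the missing step by an expectation (computing $\mathrm{ord}_{F_i}\varphi \bmod r$ and the unproved assertion $\gcd(\mathrm{ord}_{F_i}\varphi-1,r)=1$), so as submitted the proposal proves $V\geq r^N$, not $V\geq r^{N+1}$. For comparison: the paper does not perform any such normalisation of $\varphi$ either; it passes from the lct computation to ``$l_id$ divides $V$'' in a single line, implicitly reading the earlier lemma as transferring the full denominator of $B_Z$ (including the factor $r$) to $M_Z$. So you have correctly isolated the one delicate point of the argument, and your scruple about the mod-$\frac{1}{r}$ ambiguity is legitimate as a criticism of that one-liner; but a complete proof must close it, and yours does not.
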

\begin{proof}
The first part of the statement follows easily from the fact the degree of $(K_X+2/dD)|_F$
is $0$.
The Cartier index of the fibre is
$$r=\min\{m|\,m(K_X+2/dD)|_F\; {\rm is} \; {\rm a} \; {\rm Cartier} \; {\rm divisor}\}.$$
But since $F$ is a smooth rational curve this is
$$r=\min\{m|\,m(K_X+2/dD)|_F\; {\rm has} \; {\rm integer} \; {\rm coefficients}\}=\frac{d}{2}$$
and the second part of the statement is proved.
In order to prove the third part of the statement we remark that since $D$ is smooth at $p_i$
and $f|_D$ ramifies at $p_i$ the only possibility is that $D$ is tangent to $F$ at $p_i$
with order of tangency exactly $l_i$.\\
Then we can apply Lemma \ref{3ind}
and by Equation (\ref{diesis}) an expression for $\gamma$ is
$$\gamma= 1 +\frac{1}{l_i}- \frac{2}{d}.$$
Since $l_i$ and $d$ are coprime, $l_i d$ divides $V$ for all $i$.
Again since $l_i$ and $l_j$ are coprime for all $i\neq j$
$$l_1\ldots l_N d\mid V.$$
Since $l_i\geq d/2=r$ for all $i$ we have
$$V\geq l_1\ldots l_N d\geq 2 r^{N+1}.$$
\end{proof}

\begin{proof}[Proof of Theorem \ref{mainteo} (1)]
Let $N$ be a positive integer and $f\colon X\rightarrow Z$
be a $\mathbb{P}^1$-bundle on a smooth curve.
Let $U\subseteq Z$ be an open set that trivializes the $\mathbb{P}^1$-bundle and
such that we have a local coordinate $t$ on it.
Take $d,l_1,\ldots,l_N\in\mathbb{N}$ be such that
$$l_0:=0<l_1<\ldots<l_N<l_{N+1}:=d$$
and such that they verify conditions (1)(2)(3) of Lemma \ref{spiegone}.
Let $o_1,\ldots,o_N$ be distinct points in $U$.
Let $[u:v]$ be the coordinates on the fibre and $x=u/v$
the local coordinate on the open set $\{v\neq 0\}$.
Let $D$ be the Zariski closure in $X$ of
$$D_0=\left\{\sum_{k=1}^{N+1}\left((x^{l_{k-1}}+\ldots +x^{l_{k} -1})\prod_{i=k}^N (t-o_i)\right)\right\}.$$
The restriction of $D$ to the fibre over $o_i$
is the zero locus of a polynomial of the form
$$h_i(x)=x^{l_i}q_i(x)$$
such that $x$ does not divide $q_i$.
Notice that $D$ is smooth at the points $p_i=(0,o_i)$
because the derivative with respect to $t$ of the polynomial
that defines $D_0$ is non-zero at those points.
This insures that $D$ is tangent to the fibre $F=f^{\ast}o_i$
with multiplicity exactly $l_i$
and then that $$f|_D\colon D\rightarrow Z$$
has ramification index exactely $l_i$ at $p_i$.
The fibration $f\colon (X,2/dD)\rightarrow Z$
satisfies all the hypotheses of Lemma \ref{spiegone}.
Therefore if $V$ is the minimum positive integer such that
$VM_Z$ has integer coefficients we have $V\geq r^{N+1}$.
\end{proof}

\begin{proof}[Proof of Theorem \ref{mainteo} (2)]
Let $B_Z=\sum b_i o_i$ be the discriminant divisor.
Let $V$ be the minimum integer number such that $V B_Z$ has integer coefficients.
If we write $b_i=u_i/v_i$ with $u_i,v_i\in \mathbb{N}$ and coprime it is clear that
$V=lcm\{v_i\}$.
We have seen in the proof of Theorem \ref{dim2} that
$v_i$ divides $l_i r$ for some $l_i\leq 2r$.
Then $$V=lcm\{v_i\} \mid lcm\{l_i r\}.$$
Let us remark that if $q$ is a prime number such that
$q^k$ divides $V$
then there exists a point $p$ such that $q^k$ divides $l_p r$.
Let $r=\prod q_i^{k(q_i)}$ be the decomposition of $r$ into prime factors 
and suppose that $q$ is equal to some prime $q_1$.
We have then that $$q_1 ^{k-k(q_1)} \mid l_p\leq 2r.$$
Set $$s(q)=\max\{s \mid q^s\leq 2r\}.$$
\end{proof}

The bound of Theorem \ref{mainteo} is not far from being sharp
thanks to the following example.

\begin{proof}[Proof of Theorem \ref{mainteo} (3)]
Let $r$ be an odd integer number.
Let $s(q)$
be the integer defined above.
Set $$h(q)=\max\{h\mid r\leq2^h q^{s(q)}\leq 2r\}$$
and set $$\{l_1<\ldots<l_N\}=\{2^{h(q)}q^{s(q)}|\;
q<2r,\;q\;{\rm prime}
\},$$
$$l_0=0, l_{N+1}=d=2r.$$
Consider the divisor $\bar{D}$ defined as the Zariski closure of
$$D_0=\left\{\sum_{k=1}^{N+1}\left((x^{l_{k-1}}+\ldots +x^{l_{k} -1})\prod_{i=k}^N (t-o_i)\right)\right\}.$$
Consider now $B=1/r\bar{D}$.
The fibration $f\colon (X,B)\rightarrow Z$ is lc-trivial.
Let $V$ be the minimum integer such that $VM_Z$ has integer coefficients.

Then for each $i=1\ldots N$ by Lemma \ref{3ind} we have the following expression for $\gamma_i$:
$$\gamma_i=1-\frac{2l_i-d}{l_i d}=1+\frac{r-l_i}{l_i r}.$$
For every $i$ we have $l_i=2^{h(q)}q^{s(q)}$ for a suitable $q$.
Since $r$ is odd
$$gcd\{2^{h(q)}q^{s(q)},r\}=q^{s'(q)}$$
for some $s'(q)$, then 
$$\gamma_i=1-\frac{l_i-r}{l_i r}=1+\frac{r/q^{s'(q)}-2^{h(q)}q^{s(q)-s'(q)}}{2^{h(q)}q^{s(q)-s'(q)}r}.$$
Then for all $q$ such that $q\leq 2r$
we have $$2^{h(q)}q^{s(q)-s'(q)}r|V$$
that implies that
$$lcm\{2^{h(q)}q^{s(q)-s'(q)}r\}|V.$$
But
$$lcm\{2^{h(q)}q^{s(q)-s'(q)}r\}=\frac{N(r)}{r}.$$
\end{proof}

\addcontentsline{toc}{chapter}{Bibliography}

\end{document}